\title{Heisenberg Uncertainty Inequality for Gabor Transform}
\numberwithin{equation}{section}
\theoremstyle{definition}
\newtheorem{thm}{\sc Theorem}[section]
\newtheorem{lem}[thm]{\sc Lemma}
\newtheorem{prop}[thm]{\sc Proposition}
\newtheorem{ex}[thm]{\sc Example}
\newcommand{\R}{\mathbb{R}}
\newcommand{\h}{\mathbb{H}}
\newcommand{\C}{\mathbb{C}}
\newcommand{\F}{\mathscr{F}}
\newcommand{\g}{\mathfrak{g}}
\newcommand{\hf}{\mathfrak{h}}
\newcommand{\z}{\mathfrak{z}}
\newcommand{\U}{\mathcal{U}}
\newcommand{\W}{\mathcal{W}}
\newcommand{\B}{\mathcal{B}}
\newcommand{\ch}{\mathcal{H}}
\newcommand{\hh}{\mathscr{H}}
\newcommand{\CS}{\mathcal{S}}
\DeclareMathOperator*{\tr}{tr}
\DeclareMathOperator*{\expo}{exp}
\DeclareMathOperator*{\pf}{Pf}
\DeclareMathOperator*{\ind}{ind}
\DeclareMathOperator*{\Aut}{Aut}
\begin{document}
\begin{abstract}
We discuss Heisenberg uncertainty inequality for groups of the form $K \ltimes \R^n$, $K$ is a separable unimodular locally compact group of type I. This inequality is also proved for Gabor transform for several classes of groups of the form $K \ltimes \R^n$. 
\end{abstract}
\author[A. Bansal]{ASHISH BANSAL}
\address{Department of Mathematics, Keshav Mahavidyalaya (University of Delhi), H-4-5 Zone, Pitampura, Delhi, 110034, India.}
\email{abansal@keshav.du.ac.in}
\author[A. Kumar]{AJAY KUMAR}
\address{Department of Mathematics, University of Delhi, Delhi, 110007, India.}
\email[Corresponding author]{akumar@maths.du.ac.in}
\keywords{Heisenberg uncertainty inequality, Fourier transform, Gabor transform, Heisenberg group, nilpotent Lie group, Plancherel formula}
\subjclass[2010]{Primary 43A32; Secondary 43A30; 22D10; 22D30; 22E25}
\maketitle
\section{Introduction}
The uncertainty principle states that a non-zero function and its Fourier transform cannot both be sharply localized. The most precise way of formulating this principle quantitatively is the inequality known as \textit{Heisenberg uncertainty inequality}. Let $f$ be any function in $L^2(\R)$. The Fourier transform of $f$ is defined as
\begin{flalign*}
&&\widehat{f}(\omega)&=\displaystyle\int_{\R^n}{f(x)\ e^{-2\pi i \omega x}}\ dx.&
\end{flalign*}
The following theorem gives the Heisenberg uncertainty inequality for the Fourier transform on $\R$:
\begin{thm}\label{theq-R} 
For any $f \in L^2(\R)$, we have
\begin{flalign}
&&\dfrac{\|f\|_2^2}{4\pi}&\leq \left(\displaystyle\int_{\R}{x^2\ |f(x)|^2}\ dx\right)^{1/2}\left(\displaystyle\int_{\R}{\omega^2\ |\widehat{f}(\omega)|^2}\ d\omega\right)^{1/2},   \label{heq-R}&
\end{flalign}
where $\|\cdot\|_2$ denotes the $L^2$-norm. 
\end{thm}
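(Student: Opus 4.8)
The plan is to reduce first to the case where the right-hand side of \eqref{heq-R} is finite, since otherwise the inequality is trivial; thus assume both $x f(x) \in L^2(\R)$ and $\omega\,\widehat{f}(\omega) \in L^2(\R)$. The second condition, together with the Plancherel theorem and the identity $\widehat{f'}(\omega) = 2\pi i\,\omega\,\widehat{f}(\omega)$, shows that $f$ lies in the Sobolev space $H^1(\R)$; in particular $f$ has an absolutely continuous representative with $f' \in L^2(\R)$ and $\|f'\|_2 = 2\pi\,\|\omega\,\widehat{f}(\omega)\|_2$. I would first establish the estimate for $f$ in the Schwartz class $\CS(\R)$ and then pass to the general case by a density argument.

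For $f \in \CS(\R)$, the starting point is the integration-by-parts identity
\[
\|f\|_2^2 = \int_\R |f(x)|^2\,dx = \Big[\,x\,|f(x)|^2\,\Big]_{-\infty}^{\infty} - \int_\R x\,\tfrac{d}{dx}|f(x)|^2\,dx = -2\,\mathrm{Re}\int_\R x\,f'(x)\,\overline{f(x)}\,dx,
\]
where the boundary term vanishes by the rapid decay of $f$. Applying the triangle inequality and then Cauchy--Schwarz gives
\[
\|f\|_2^2 \le 2\int_\R |x|\,|f'(x)|\,|f(x)|\,dx \le 2\,\Big(\int_\R x^2|f(x)|^2\,dx\Big)^{1/2}\Big(\int_\R |f'(x)|^2\,dx\Big)^{1/2},
\]
and combining this with $\|f'\|_2 = 2\pi\,\|\omega\,\widehat{f}(\omega)\|_2$ yields exactly \eqref{heq-R} for Schwartz $f$.

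To remove the Schwartz restriction, I would approximate a general $f$ with finite right-hand side by Schwartz functions $f_k$ with $f_k \to f$ in $H^1(\R)$ and $x f_k(x) \to x f(x)$ in $L^2(\R)$ (for instance by truncation followed by mollification), and then pass to the limit in the three quantities $\|f_k\|_2$, $\|x f_k\|_2$ and $\|\omega\,\widehat{f_k}\|_2$. The main obstacle is precisely this last step: one must justify the integration by parts for functions that are merely in $H^1(\R)$ with $x f \in L^2(\R)$ and arrange the approximation so that all three integrals converge simultaneously. An alternative that sidesteps the density argument is to argue directly: since $x f \in L^2(\R)$ and $f' \in L^2(\R)$, the product $x\,f'(x)\,\overline{f(x)}$ is integrable by Cauchy--Schwarz and $x\,|f(x)|^2 \to 0$ along a suitable sequence $x \to \pm\infty$, which is enough to run the integration by parts and conclude in full generality.
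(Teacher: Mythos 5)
The paper gives no proof of Theorem \ref{theq-R} --- it simply refers the reader to Folland--Sitaram \cite{Fol:Sit:97} --- and your argument is precisely the standard proof found there: integration by parts to get $\|f\|_2^2 = -2\,\mathrm{Re}\int x f'\overline{f}\,dx$, Cauchy--Schwarz, and the identity $\|f'\|_2 = 2\pi\|\omega\widehat{f}\|_2$, with a correct reduction to the case where the right-hand side is finite. The proof is correct, including the constant $4\pi$ under the paper's normalization of the Fourier transform, and your closing remark about justifying the boundary term for general $f\in H^1$ with $xf\in L^2$ handles the only delicate point.
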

\noindent For proof of the theorem, refer to \cite{Fol:Sit:97}.

The representation of $f$ as a function of $x$ is usually called its \textit{time-representation}, while the representation of $\hat{f}$ as a function of $\omega$ is called its \textit{frequency-representation}. The Fourier transform has been the most commonly used tool for analyzing frequency properties of a given signal, but the problem with this tool is that after transformation, the information about time is lost and it is hard to tell where a certain frequency occurs. To counter this problem, we can use \textit{joint time-frequency representation}, i.e., Gabor transform. 

Let $\psi \in L^2(\R)$ be a fixed non-zero function usually called a \textit{window function}. The Gabor transform of a function $f\in L^2(\R)$ with respect to the window function $\psi$ is defined by
\begin{flalign*}
G_\psi f : \R \times \widehat{\R} \rightarrow \C
\end{flalign*}
such that
\begin{flalign*}
G_\psi f(t,\omega)=\int_{\R}{f(x)\ \overline{\psi(x-t)}\ e^{-2\pi i \omega x}}\ dx,
\end{flalign*}
for all $(t,\omega)\in \R \times \widehat{\R}$. The following uncertainty inequality of Heisenberg-type has been proved by Wilczok \cite{Wil:00}. 
\begin{thm}\label{theq-GT-R} 
Let $\psi$ be a window function. Then, for arbitrary $f \in L^2(\R)$, the following inequality holds
\begin{flalign}
\dfrac{\|\psi\|_2\ \|f\|_2^2}{4\pi}&\leq\left(\displaystyle\int_{\R}{x^2\ |f(x)|^2}\ dx\right)^{1/2}\left(\displaystyle\int_{\R^2}{\omega^2\ |G_\psi f(t,\omega)|^2}\ dt\ d\omega\right)^{1/2}. \label{heq-GT-R}&
\end{flalign}
\end{thm}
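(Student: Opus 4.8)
The plan is to reduce \eqref{heq-GT-R} to the classical Heisenberg inequality of Theorem~\ref{theq-R} applied to the windowed functions, followed by an integration in the time variable. For fixed $t\in\R$ set $f_t(x)=f(x)\,\overline{\psi(x-t)}$; then directly from the definition $G_\psi f(t,\omega)=\widehat{f_t}(\omega)$, i.e.\ for each $t$ the Gabor transform is simply the Euclidean Fourier transform of $f_t$. We may assume $\int_\R x^2\,|f(x)|^2\,dx<\infty$, since otherwise the right-hand side of \eqref{heq-GT-R} is infinite and there is nothing to prove. Under this hypothesis Tonelli's theorem together with the translation invariance of Lebesgue measure gives
\[
\int_\R\left(\int_\R x^2\,|f_t(x)|^2\,dx\right)dt=\|\psi\|_2^2\int_\R x^2\,|f(x)|^2\,dx<\infty,\qquad \int_\R\|f_t\|_2^2\,dt=\|\psi\|_2^2\,\|f\|_2^2<\infty,
\]
so that $f_t\in L^2(\R)$ and $x\,f_t\in L^2(\R)$ for almost every $t$, which places $f_t$ in the domain of Theorem~\ref{theq-R}.

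Next I would apply Theorem~\ref{theq-R} to $f_t$ for each such $t$, obtaining
\[
\frac{\|f_t\|_2^2}{4\pi}\leq\left(\int_\R x^2\,|f_t(x)|^2\,dx\right)^{1/2}\left(\int_\R \omega^2\,|G_\psi f(t,\omega)|^2\,d\omega\right)^{1/2}.
\]
Integrating this inequality over $t\in\R$ and applying the Cauchy--Schwarz inequality in the variable $t$ to the right-hand side yields
\[
\frac{1}{4\pi}\int_\R\|f_t\|_2^2\,dt\leq\left(\int_\R\int_\R x^2\,|f_t(x)|^2\,dx\,dt\right)^{1/2}\left(\int_\R\int_\R \omega^2\,|G_\psi f(t,\omega)|^2\,d\omega\,dt\right)^{1/2}.
\]
Finally I would evaluate the time-integrals on the left and in the first factor on the right by Fubini's theorem, using $\int_\R|\psi(x-t)|^2\,dt=\|\psi\|_2^2$; this replaces $\int_\R\|f_t\|_2^2\,dt$ by $\|\psi\|_2^2\,\|f\|_2^2$ and $\int_\R\int_\R x^2\,|f_t(x)|^2\,dx\,dt$ by $\|\psi\|_2^2\int_\R x^2\,|f(x)|^2\,dx$. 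After cancelling one factor of $\|\psi\|_2$ from both sides, this is exactly \eqref{heq-GT-R}.

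There is no genuine analytic obstacle here: the heart of the matter is the identity $G_\psi f(t,\cdot)=\widehat{f_t}$ together with the translation invariance that produces the factor $\|\psi\|_2^2$. The only points requiring care are the measure-theoretic bookkeeping—checking that $f_t$ lies in the domain of Theorem~\ref{theq-R} for a.e.\ $t$ (handled by the Tonelli estimates above) and that Fubini applies to the non-negative integrands—and, above all, correctly tracking the power of $\|\psi\|_2$: the Cauchy--Schwarz step introduces $\|\psi\|_2$ in \emph{each} factor, i.e.\ $\|\psi\|_2^2$ in total, which matches the $\|\psi\|_2^2$ appearing on the left, so precisely one power survives after simplification. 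The same computation incidentally recovers the Plancherel relation $\|G_\psi f\|_{L^2(\R^2)}^2=\|\psi\|_2^2\,\|f\|_2^2$, which is the natural normalization lurking behind the statement.
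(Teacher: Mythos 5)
Your proof is correct, and it is essentially the argument the paper itself relies on: Theorem~\ref{theq-GT-R} is quoted from Wilczok, but the paper's own proof of the generalized Gabor inequality in Section~3 proceeds exactly as you do --- apply the Fourier-transform Heisenberg inequality to the windowed function $f_t=f\,\overline{\psi(\cdot-t)}$, integrate in $t$, use Cauchy--Schwarz, and evaluate $\int_\R\|f_t\|_2^2\,dt=\|\psi\|_2^2\|f\|_2^2$. One tiny quibble: your closing remark that Cauchy--Schwarz ``introduces $\|\psi\|_2$ in each factor, i.e.\ $\|\psi\|_2^2$ in total'' is inaccurate (only the first factor yields an explicit $\|\psi\|_2$; the Gabor factor is left as is), but the displayed computation and the final cancellation of a single power of $\|\psi\|_2$ are right.
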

 
The continuous Gabor transform for second countable, non-abelian, unimodular and type I groups has been defined by Farashahi and Kamyabi-Gol in \cite{Far:Kam:12}. 

In section $2$, we shall state the Heisenberg uncertainty inequality for the groups of the form $K\ltimes \R^n$, where $K$ is a separable unimodular locally compact group of type I and prove it for the semi-direct product $K \ltimes \R^n$ (where $K$ is a compact subgroup of the group of automorphisms of $\R^n$). In section $3$, we shall discuss continuous Gabor transform and prove Heisenberg uncertainty inequality for Gabor transform on $K\ltimes \R^n$ (where $K$ is a separable unimodular locally compact group of type I) that satisfy the Heisenberg uncertainty inequality for Fourier transform. The explicit forms of Heisenberg uncertainty inequality for Gabor transform are obtained for $K \ltimes \R^n$, $K$ is a compact subgroup of $\Aut(\R^n)$; $\R^n\times K$, $K$ is separable unimodular locally compact group of type~I; Heisenberg group $\h_n$; Thread-like nilpotent Lie groups; $2$-NPC nilpotent Lie groups and several classes of connected, simply connected nilpotent Lie groups.
%%%%
\section{Extensions of $\R^n$}
Let $G=K\ltimes \R^n$, where $K$ is a separable unimodular locally compact group of type I. For $\gamma \in \widehat{\R^n}$, let $G_\gamma$, $K_\gamma$ denote the stabilizer subgroup of $\gamma$ in $G$ and $K$ respectively and let
\begin{flalign*}
\check{G}_\gamma=\{\nu\in \widehat{G_\gamma}: \nu|_{\R^n}\ \text{is a finite multiple of}\ \gamma\}.
\end{flalign*}
Then for $\nu \in \check{G}_\gamma$, the representation $\pi_\nu=\ind_{G_\gamma}^G \nu$ is irreducible and
\begin{flalign*}
\widehat{G}=\cup_{\widehat{\R^n}/G}\{\pi_\nu :\nu \in \check{G}_\gamma\}.
\end{flalign*}
Since $\R^n$ is abelian, any $\nu \in \check{G}_\gamma$ is of the form $\nu=\sigma \otimes \gamma$, $\nu(kx)=\sigma(k)\gamma(x)$, $k\in K_\gamma$, $x\in \R^n$ and $\sigma \in \widehat{K_\gamma}$.

We consider the induced representations
\begin{flalign*}
\pi_{\gamma,\sigma}={\ind}_{G_\gamma}^{G}(\gamma \otimes \sigma).
\end{flalign*}
The Plancherel formula for $G$ (for details, see \cite{Kle:Lip:73}) takes the following form:
\begin{prop}[Plancherel formula]
For all $f\in L^2(G)$, we have
\begin{flalign*}
\displaystyle\int_{G}{|f(g)|^2}\ dg=\displaystyle\int_{\widehat{\R^n}/G} \displaystyle\int_{\widehat{K_\gamma}}{\|\pi_{\gamma,\sigma}(f)\|_2^2}\ d\mu_\gamma(\sigma)\ d\overline{\mu}_{\R^n}(\overline{\gamma}).
\end{flalign*} 
\end{prop}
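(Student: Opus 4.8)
The plan is to obtain this from the Mackey normal-subgroup analysis applied to the closed normal abelian subgroup $\R^n$ of $G$, combined with the Plancherel theorem on the little groups $K_\gamma$; this is essentially the content of \cite{Kle:Lip:73}, and I would organize it as follows.

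First I would apply the Euclidean Fourier transform in the $\R^n$-variable. Writing $g=(k,x)$ with $k\in K$, $x\in\R^n$, unimodularity of $G$ together with the fact that $K$ acts on $\R^n$ by measure-preserving automorphisms lets us normalize Haar measure on $G$ as the product of Haar measure on $K$ and Lebesgue measure on $\R^n$; the partial Fourier transform in $x$ is then a unitary $L^2(G)\to L^2(K\times\widehat{\R^n})$ which diagonalizes the restriction of the regular representation to $\R^n$ and transports the $K$-action to the dual action on $\widehat{\R^n}$. This presents $L^2(G)$ as a direct integral over $\widehat{\R^n}$, and regrouping the fibers along $G$-orbits exhibits it as a direct integral over $\widehat{\R^n}/G$ of induced pieces, each built from the corresponding little group $K_\gamma$.

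Next I would make the orbital decomposition precise. Since $G$ is second countable and of type I, the action on $\widehat{\R^n}$ is smooth, so there is a Borel cross-section for $\widehat{\R^n}/G$; disintegrating Lebesgue measure along the orbit map yields the measure $\overline{\mu}_{\R^n}$ on $\widehat{\R^n}/G$ and, for each orbit, a measure that may be identified with an invariant (by unimodularity) measure on $K/K_\gamma\cong G/G_\gamma$. Carrying out the $K/K_\gamma$-integration collapses the orbit of $\gamma$ to a copy of the left regular representation of $K_\gamma$ on $L^2(K_\gamma)$ twisted by $\gamma$; applying the Plancherel formula for the separable unimodular type I group $K_\gamma$, with Plancherel measure $\mu_\gamma$ on $\widehat{K_\gamma}$, replaces this contribution by $\int_{\widehat{K_\gamma}}\|\sigma(\varphi)\|_2^2\,d\mu_\gamma(\sigma)$ for the appropriate section $\varphi$. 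Finally, identifying through the Mackey theory of induced representations the operator appearing in each $\widehat{K_\gamma}$-fiber with $\pi_{\gamma,\sigma}(f)=\bigl(\ind_{G_\gamma}^{G}(\gamma\otimes\sigma)\bigr)(f)$ in the Hilbert–Schmidt norm assembles the asserted identity.

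I expect the main obstacle to be the measure-theoretic bookkeeping in the second step: constructing the measurable field of representation Hilbert spaces, choosing the cross-section and disintegrating Lebesgue measure consistently with it, and above all checking that unimodularity of $G$ forces every modular and formal-degree factor to be trivial, so that the formula emerges with no extra weights. Verifying that each $\pi_{\gamma,\sigma}$ is irreducible and that the family exhausts $\widehat{G}$ — needed in order to know the direct integral runs over almost all of $\widehat{G}$ — rests on Mackey's criteria, which again use the regularity of the $G$-action on $\widehat{\R^n}$ guaranteed by the type I hypothesis.
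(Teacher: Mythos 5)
The paper does not actually prove this proposition: it is quoted directly from Kleppner--Lipsman \cite{Kle:Lip:73}, and your sketch retraces exactly the route taken there (partial Fourier transform in the normal abelian factor, orbital disintegration of Lebesgue measure over $\widehat{\R^n}/G$, Plancherel on the little groups $K_\gamma$, reassembly via the induced representations $\pi_{\gamma,\sigma}$). So in substance you and the paper lean on the same argument; the comparison is between your outline and the reference, not between two different proofs.

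One step of your outline does conceal a genuine issue. You write that ``since $G$ is second countable and of type I, the action on $\widehat{\R^n}$ is smooth,'' but the hypothesis actually stated is only that $K$ is type I, which implies neither that $G$ is type I nor that the dual action is regular: for $\Z$ acting on $\R^2$ through an irrational rotation, or for the Mautner group $\R\ltimes\C^2$, the orbit space $\widehat{\R^n}/G$ is not countably separated and the right-hand side of the formula is not even well defined. Regularity of the action (equivalently, under the remaining hypotheses, type I-ness of $G$) is an additional assumption that must be imposed, not derived; your proof would fail without it, and so, strictly speaking, does the proposition as stated. Two smaller points: the reduction to the Plancherel formula of $K_\gamma$ uses that every $\nu\in\check{G}_\gamma$ factors as $\gamma\otimes\sigma$ with $\sigma\in\widehat{K_\gamma}$, which holds here because the extension splits and $\R^n$ is abelian, so $\gamma$ extends to a character of $G_\gamma$ and the Mackey obstruction vanishes --- worth a sentence in a full write-up; and the compatible normalization of Haar measure on $K_\gamma$, the invariant measure on $K/K_\gamma$, and the pseudo-image measure $\overline{\mu}_{\R^n}$ (so that no formal-degree weights survive) is, as you correctly anticipate, where most of the actual work in \cite{Kle:Lip:73} is concentrated.
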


We now state the Heisenberg uncertainty inequality for $G$ which has been proved, in particular cases of $\R^n$ (see \cite{Fol:Sit:97}); Heisenberg group (see \cite{Tha:90},\cite{Sit:Sun:Tha:95} and \cite{Xia:He:12}); $\R^n \times K$ (where $K$ is a separable unimodular locally compact group of type I), Euclidean motion group $M(n)=SO(n)\ltimes \R^n$ and several general classes of nilpotent Lie groups which include thread-like nilpotent Lie groups, $2$-NPC nilpotent Lie groups and several low-dimensional nilpotent Lie groups (see \cite{Ban:Kum:15}). 
\begin{thm}\label{theq-FT-G}
For any $f \in L^2(G)$ and $a,b \geq 1$, we have
\begin{flalign}
\|f\|_2^{\left(\frac{1}{a}+\frac{1}{b}\right)} &\leq C\left(\displaystyle\int_{K\times \R^n}{\|x\|^{2a}\ |f(k,x)|^2}\ dx\ dk\right)^{\frac{1}{2a}}\nonumber&\\
&\quad \times\left(\displaystyle\int_{\widehat{\R^n}/ G}\displaystyle\int_{\widehat{K_\gamma}}{\|\gamma\|^{2b}\|\pi_{\gamma,\sigma}(f)\|_{\text{HS}}^2}\ d\mu_\gamma(\sigma)\ d\overline{\mu}_{\R^n}(\overline{\gamma}),\right)^{\frac{1}{2b}},\label{heq-FT-G}&\tag{H}
\end{flalign}
where $C$ is a constant.
\end{thm}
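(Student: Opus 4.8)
As announced in the introduction, the plan is to establish \eqref{heq-FT-G} in the case where $K$ is a compact subgroup of $\Aut(\R^n)$, reducing it to the Heisenberg inequality on $\R^n$ with exponents $a,b\ge 1$ (which follows from Theorem \ref{theq-R} by Hölder's inequality) applied fibrewise over $K$. We may assume the right-hand side of \eqref{heq-FT-G} is finite. First I would attach to $f\in L^2(G)$ the partial Euclidean Fourier transform $F(k,\xi)$ of the map $x\mapsto f(k,x)$; then $F\in L^2(K\times\R^n)$, the Plancherel theorem on $\R^n$ (applied for each fixed $k$) gives $\|f\|_2^2=\int_K\|F(k,\cdot)\|_{L^2(\R^n)}^2\,dk$, and the first factor on the right of \eqref{heq-FT-G} is already expressed directly through $f$, so the only work is to handle the representation-theoretic second factor.

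The key intermediate step is a \emph{reassembly identity} linking the representation side of the Plancherel formula for $G$ to the Euclidean side: for any weight $w$ on $\widehat{\R^n}$ that is constant on $K$-orbits,
\[
\int_K\int_{\R^n} w(\xi)\,|F(k,\xi)|^2\,d\xi\,dk=\int_{\widehat{\R^n}/G}\int_{\widehat{K_\gamma}} w(\gamma)\,\|\pi_{\gamma,\sigma}(f)\|_{\text{HS}}^2\,d\mu_\gamma(\sigma)\,d\overline{\mu}_{\R^n}(\overline{\gamma}).
\]
To prove this I would let $g\in L^2(G)$ be the function whose partial Fourier transform in the second variable is $\sqrt{w}\,F$, that is, $g(k,\cdot)=f(k,\cdot)\ast_{\R^n}\check m$ with $m=\sqrt w$; using the identity $(e,z)(k,y)=(k,z+y)$ in $G$ one finds $\pi_{\gamma,\sigma}(g)=\big(\int_{\R^n}\check m(z)\,\pi_{\gamma,\sigma}(e,z)\,dz\big)\,\pi_{\gamma,\sigma}(f)$, and since the $\R^n$-spectrum of $\pi_{\gamma,\sigma}=\ind_{G_\gamma}^G(\gamma\otimes\sigma)$ is the $K$-orbit of $\gamma$ (Mackey's description), the bracketed operator is the scalar $\sqrt{w(\gamma)}$, whence $\|\pi_{\gamma,\sigma}(g)\|_{\text{HS}}^2=w(\gamma)\|\pi_{\gamma,\sigma}(f)\|_{\text{HS}}^2$. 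Comparing the Plancherel formula for $G$ applied to $g$ with $\|g\|_2^2$ computed via the $\R^n$-Plancherel theorem then yields the identity. Because $K$ is compact it preserves a norm $|\cdot|_K$ on $\widehat{\R^n}$ equivalent to $\|\cdot\|$, so applying the identity with $w=(|\cdot|_K)^{2b}$ and absorbing the equivalence constants gives an estimate $\int_K\int_{\R^n}\|\xi\|^{2b}\,|F(k,\xi)|^2\,d\xi\,dk\le C_1 B$, where $B=\int_{\widehat{\R^n}/G}\int_{\widehat{K_\gamma}}\|\gamma\|^{2b}\,\|\pi_{\gamma,\sigma}(f)\|_{\text{HS}}^2\,d\mu_\gamma(\sigma)\,d\overline{\mu}_{\R^n}(\overline{\gamma})$ is the iterated integral occurring in the second factor of \eqref{heq-FT-G}.

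Finally I would run the fibrewise argument. For each $k$, the Heisenberg inequality on $\R^n$ applied to $f(k,\cdot)$, raised to the power $2ab/(a+b)$, gives
\[
\|f(k,\cdot)\|_2^2\le C_0\Big(\int_{\R^n}\|x\|^{2a}|f(k,x)|^2\,dx\Big)^{\frac{b}{a+b}}\Big(\int_{\R^n}\|\xi\|^{2b}|F(k,\xi)|^2\,d\xi\Big)^{\frac{a}{a+b}}.
\]
Integrating in $k$ and applying Hölder's inequality with the conjugate exponents $\tfrac{a+b}{b}$ and $\tfrac{a+b}{a}$ turns this into $\|f\|_2^2\le C_0\,A^{b/(a+b)}\,B'^{\,a/(a+b)}$, where $A=\int_{K\times\R^n}\|x\|^{2a}|f(k,x)|^2\,dx\,dk$ and $B'=\int_{K\times\R^n}\|\xi\|^{2b}|F(k,\xi)|^2\,d\xi\,dk$; substituting $B'\le C_1 B$ and raising to the power $\tfrac{a+b}{2ab}$ produces \eqref{heq-FT-G}, since $\tfrac{a+b}{ab}=\tfrac1a+\tfrac1b$, with a constant $C$ depending only on $a$, $b$, $n$ and $K$. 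I expect the reassembly identity to be the main obstacle: one must justify rigorously that an $\R^n$-Fourier multiplier constant on $K$-orbits acts on each operator $\pi_{\gamma,\sigma}(f)$ as a scalar — precisely where the Mackey description of $\widehat{G}$ and the explicit form of $\pi_{\gamma,\sigma}$ are needed — together with the measure-theoretic bookkeeping required to pass between the two Plancherel formulas (handled, if necessary, by truncating $w$ and using monotone convergence) and the replacement of the non-$G$-invariant weight $\|\gamma\|^{2b}$ by a $K$-invariant one via compactness of $K$.
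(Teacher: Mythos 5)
Your proposal, like the paper itself, does not actually prove the theorem as stated: Theorem \ref{theq-FT-G} asserts \eqref{heq-FT-G} for an arbitrary separable unimodular type I group $K$, but the paper gives no proof in that generality and says explicitly, right after the statement, that it does not know whether \eqref{heq-FT-G} holds for general $K\ltimes\R^n$. The only case actually proved is Theorem \ref{theq-G}, where $K$ is a \emph{compact} subgroup of $\Aut(\R^n)$ --- which is exactly the case you restrict to from the outset, so the fair comparison is with that proof.

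On the compact case your argument is sound in outline but takes a genuinely different route. The paper reduces by density to $\CS(G)$, applies the one-dimensional inequality of Theorem \ref{theq-R} only in the variable $x_1$ (with H\"older in the remaining variables), converts the Euclidean weight $|y_1|^2$ into the derivative $\partial f/\partial x_1$, passes to the representation side by the Plancherel formula (Proposition \ref{PT-CE}), and then computes that $\pi_{\ell,\sigma}(\partial f/\partial x_1)$ acts as multiplication by $i\langle \ell,s^{-1}e_1s\rangle$ composed with $\pi_{\ell,\sigma}(f)$, so that compactness of $K$ (boundedness of $\{s^{-1}e_1s : s\in K\}$) yields $\|\pi_{\ell,\sigma}(\partial f/\partial x_1)\|_{\text{HS}}^2\le \mathrm{const}\,\|\ell\|^2\|\pi_{\ell,\sigma}(f)\|_{\text{HS}}^2$. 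Your reassembly identity is the global, multiplier version of the same mechanism: both arguments rest on the two structural facts that Haar measure on $G$ is the product measure and that $\pi_{\ell,\sigma}|_{\R^n}$ acts by the characters in the $K$-orbit of $\ell$, whose norms are uniformly comparable to $\|\ell\|$ by compactness. What your route buys is the full $n$-dimensional weight $\|\xi\|^{2b}$ on the Euclidean side and no differentiation of $f$ (hence no need for the Schwartz reduction at that step); what it costs is the rigorous justification of the multiplier calculus and of the matching of the Plancherel measure with the pushforward of Lebesgue measure, which you correctly identify as the main obstacles and which are comparable in weight to the steps the paper itself defers to \cite{Ban:Kum:15}. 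One point worth keeping: since $K$ need not preserve the Euclidean norm, $\|\gamma\|^{2b}$ is not constant on orbits, so your replacement of $\|\cdot\|$ by an equivalent $K$-invariant norm is genuinely required (and harmless, affecting only the constant $C$); the paper's counterpart of this is the bound on $\{s^{-1}e_1s\}$.
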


We do not know whether the inequality \eqref{heq-FT-G} is true for $K \ltimes \R^n$, however we now prove the Heisenberg uncertainty inequality when $K$ is a compact subgroup of $\Aut(\R^n)$.

Let $G$ be the semi-direct product $K \ltimes \R^n$, where $K$ is a compact subgroup of $\Aut(\R^n)$. The Haar measure on $G$ is $dg=d\nu(k)\ dx$, where $d\nu(k)$ denotes the normalized Haar measure of $K$ and $dx$ denotes the Lebesgue measure on $\R^n$. We shall now give more explicit description of the unitary dual space of the group $G$ in this case which can be determined by Mackey's theory. For more details, refer to \cite{Mac:76}.  

Let $\ell$ be a non-zero real linear form on $\R^n$ and let $\chi_\ell$ be the unit character of $\R^n$ defined by $\chi_\ell(x)=e^{i\langle l,x\rangle}$. The natural action $g\cdot \ell$ of $G$ on the dual vector space of $\R^n$ is given by 
\begin{flalign*}
\langle g\cdot \ell,x\rangle =\langle\ell, g^{-1}xg\rangle,
\end{flalign*}
for $g\in G$ and $x \in \R^n$. Therefore, if $g$ acts on $\widehat{\R^n}$ by 
\begin{flalign*}
g \cdot \chi_\ell (x):=\chi_\ell(g^{-1}xg),
\end{flalign*}
we get $g\cdot \chi_\ell =\chi_{g\cdot \ell}$. Define
\begin{flalign*}
K_\ell=\{k\in K : k\cdot \chi_\ell=\chi_\ell\}.
\end{flalign*}
Then, the subgroup $K_\ell \ltimes R^n$ is the stabilizer of $\chi_\ell$ in $G$. We take the normalized Haar measure $d\nu_\ell$ on $K_\ell$ and a normalized $K$-invariant measure $d\dot{\nu}_\ell$ on $K/K_\ell$ so that 
\begin{flalign*}
\displaystyle\int_K{\xi(k)}\ d\nu(k)=\displaystyle\int_{K/K_\ell}\displaystyle\int_{K_\ell}{\xi(kk')}\ d\nu_\ell(k')\ d\dot{\nu}_\ell (kK_\ell).
\end{flalign*}

Regarding the action of $K$ on $\widehat{\R^n}$ which is isomorphic to $\R^n$, we set by $d\bar{\ell}$ the image of the Lebesgue measure on $\R^n/K$ by the canonical projection $\R^n \ni \ell \mapsto \bar{\ell}:=K.\ell \in \R^n/K$ such that
\begin{flalign*}
\displaystyle\int_{\R^n}{\varphi(\ell)}\ d\ell=\displaystyle\int_{\R^n/K}\displaystyle\int_{K}{\varphi(k.\ell)}\ d\nu(k)\ d\bar{\ell}.
\end{flalign*}
Let $\sigma$ be an irreducible unitary representation of $K_\ell$ and $\hh_{\ell,\sigma}$ be the completion of the vector space of all continuous mapping $\xi: K\rightarrow \hh_\sigma$ which satisfies $\xi(ks)=\sigma(s)^\ast(\xi(k))$ for $k\in K$ and $s \in K_\ell$ with respect to the norm
\begin{flalign*}
\|\xi\|_2=\left(\displaystyle\int_{K}{\|\xi(k)\|_{\hh_\sigma}^2}\ d\nu(k)\right)^{1/2}.
\end{flalign*}
The induced representation
\begin{flalign*}
\pi_{\ell,\sigma}:={\ind}_{K_\ell \ltimes \R^n}^{G}{(\sigma \otimes \chi_\ell)},
\end{flalign*}
realized on the Hilbert space $\hh_{\ell,\sigma}$ by
\begin{flalign*}
\pi_{\ell,\sigma}(k,x)\xi(s)=e^{i\langle \ell,s^{-1}xs\rangle}\xi(k^{-1}s)=e^{i \langle s.\ell,x\rangle}\xi(k^{-1}s),
\end{flalign*}
for $\xi\in \hh_{\ell,\sigma}$, $(k,x)\in G$ and $s \in K$, is an irreducible representation of $G$. Furthermore, every infinite dimensional irreducible unitary representation of $G$ is equivalent to some representation $\pi_{\ell,\sigma}$.

The Plancherel formula \cite[Theorem 7.44]{Fol:94} can be stated in this particular case as follows:
\begin{prop}[Plancherel formula] \label{PT-CE}
Let $f \in L^1(G)\cap L^2(G)$, then
\begin{flalign}
&&\displaystyle\int_{K\times \R^n}{|f(k,x)|^2}\ dx\ dk &=\displaystyle\int_{\R^n/K}\sum_{\sigma\in \widehat{K}_\ell}{\|\pi_{\ell,\sigma}(f)\|_{\text{HS}}^2}\ d\bar{\ell}. &
\end{flalign}
\end{prop}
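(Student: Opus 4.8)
The plan is to obtain Proposition~\ref{PT-CE} as the abstract Plancherel theorem for second countable unimodular type~I groups, made explicit in the present regular semidirect-product setting exactly as in \cite[Theorem 7.44]{Fol:94}; equivalently, it is the general Plancherel formula recorded above, specialized from $K$ an arbitrary separable unimodular locally compact group of type~I to $K$ a compact subgroup of $\Aut(\R^n)$. Thus I would (a) verify that the abstract theorem applies, (b) read off from the Mackey machine the parametrization of $\widehat{G}$ and of its Plancherel measure, and (c) match these data to the right-hand side of the asserted identity.

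For (a): since $K$ is compact, every $K$-orbit in $\widehat{\R^n}\cong\R^n$ is the continuous image of a compact set, hence compact, hence locally closed; and the $G$-action on $\widehat{\R^n}$ factors through $K$ (because $\R^n$ is normal and abelian, so $g^{-1}xg$ depends only on the $K$-component of $g$, giving $g\cdot\chi_\ell=\chi_{g\cdot\ell}$). Hence the action is regular, $\R^n/K$ is a standard Borel space, and $G=K\ltimes\R^n$ is of type~I. The abstract Plancherel theorem then provides a measure $\mu$ on $\widehat{G}$ with $\int_G|f(g)|^2\,dg=\int_{\widehat{G}}\|\pi(f)\|_{\text{HS}}^2\,d\mu(\pi)$ for $f\in L^1(G)\cap L^2(G)$, and by Mackey theory \cite{Mac:76} the representations $\pi_{\ell,\sigma}=\ind_{K_\ell\ltimes\R^n}^{G}(\sigma\otimes\chi_\ell)$ on $\hh_{\ell,\sigma}$, described above, exhaust $\widehat{G}$ (up to a $\mu$-null set) as $\ell$ runs over orbit representatives in $\R^n$ and $\sigma$ over $\widehat{K_\ell}$.

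For (b)--(c): the measure $\mu$ disintegrates over the orbit space. The base space $\widehat{\R^n}/G$ is, under $\ell\leftrightarrow\chi_\ell$, exactly $\R^n/K$, with base measure the push-forward $d\bar{\ell}$ of Lebesgue measure normalized by $\int_{\R^n}\varphi(\ell)\,d\ell=\int_{\R^n/K}\int_{K}\varphi(k\cdot\ell)\,d\nu(k)\,d\bar{\ell}$. Over $\bar{\ell}$ the fiber is the discrete dual $\widehat{K_\ell}$ (discrete since $K_\ell$ is a closed, hence compact, subgroup of $K$), carrying the Plancherel measure of the compact group $K_\ell$, which the Peter--Weyl theorem identifies --- once the Haar measures on $K$, $K_\ell$, $K/K_\ell$ and the inner product on $\hh_{\ell,\sigma}$ are normalized as above (the representation-dimension weights being absorbed into these normalizations) --- with the counting measure on $\widehat{K_\ell}$. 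Substituting this disintegration, together with $dg=d\nu(k)\,dx$ and the fact that $\|\pi_{\ell,\sigma}(f)\|_{\text{HS}}$ is the Hilbert--Schmidt norm computed on $\hh_{\ell,\sigma}$, into the abstract formula yields Proposition~\ref{PT-CE}.

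The step that needs care is the normalization bookkeeping in (c): one must confirm that $\mu$ factors as $d\bar{\ell}$ times counting measure on $\widehat{K_\ell}$ with no residual Jacobian or dimension constant, i.e.\ that the Peter--Weyl dimension weights and the choice of $d\bar{\ell}$ are compatible with the normalization of the induced-representation norm. The clean way to settle this is to test the claimed identity on functions of product type $f(k,x)=\phi(k)\eta(x)$ with $\phi\in C(K)$, $\eta\in C_c(\R^n)$: the left-hand side factors by Fubini, while on the right one computes $\pi_{\ell,\sigma}(f)=M_\ell\circ\pi_{\ell,\sigma}|_{K}(\phi)$, where $M_\ell$ is multiplication by $s\mapsto\int_{\R^n}\eta(x)\,e^{i\langle s\cdot\ell,x\rangle}\,dx$ and $\pi_{\ell,\sigma}|_{K}(\phi)$ is the induced action of $\phi$ on $\hh_{\ell,\sigma}$; one then evaluates $\|\pi_{\ell,\sigma}(f)\|_{\text{HS}}^2$ in an orthonormal basis of $\hh_{\ell,\sigma}$ adapted to Peter--Weyl on $K_\ell$ and reassembles the orbital integral using the Euclidean Plancherel theorem and the integration formula $\int_{\R^n}\varphi=\int_{\R^n/K}\int_{K}\varphi(k\cdot\ell)\,d\nu(k)\,d\bar{\ell}$. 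Equality on this dense subspace, together with the $L^2$-continuity of both sides on $L^1(G)\cap L^2(G)$, gives the general case.
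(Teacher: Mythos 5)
Your proposal is correct and takes essentially the same route as the paper: the paper offers no proof of Proposition~\ref{PT-CE}, presenting it simply as the specialization of the abstract Plancherel theorem \cite[Theorem 7.44]{Fol:94} combined with the Mackey--Kleppner--Lipsman description of $\widehat{G}$ and its Plancherel measure for the regular semidirect product $K\ltimes\R^n$ with $K$ compact, which is exactly the derivation you outline. Your additional care with the normalization bookkeeping (that $d\bar{\ell}$ and counting measure on $\widehat{K_\ell}$ carry no residual weights, verified on product-type functions) supplies detail the paper leaves implicit rather than diverging from it.
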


We shall now establish Heisenberg uncertainty inequality for Fourier transform on $G$. A particular case for the Euclidean motion group has been proved in \cite{Ban:Kum:15}.
\begin{thm}\label{theq-G}
For any $f \in L^2(G)$ and $a,b \geq 1$, we have
\begin{flalign}
\|f\|_2^{\left(\frac{1}{a}+\frac{1}{b}\right)} &\leq C\left(\displaystyle\int_{K\times \R^n}{\|x\|^{2a}\ |f(k,x)|^2}\ dx\ dk\right)^{\frac{1}{2a}}\nonumber&\\*
&\qquad \times\left(\displaystyle\int_{\R^n/ K}\sum_{\sigma\in \widehat{K_\ell}}{\|\ell\|^{2b}\|\pi_{\ell,\sigma}(f)\|_{\text{HS}}^2}\ d\bar{\ell}\right)^{\frac{1}{2b}},\label{heq-G}&
\end{flalign}
where $C$ is a constant.
\end{thm}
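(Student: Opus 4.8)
The plan is to reduce \eqref{heq-G} to the Heisenberg inequality on $\R^n$, applied to $f$ one slice at a time in the group variable $k\in K$, and then to glue the two sides together by means of a weighted form of the Plancherel identity in Proposition~\ref{PT-CE}. Since $K$ is compact I would first fix a $K$-invariant inner product on $\R^n$; with respect to it $\|k\cdot\ell\|=\|\ell\|$ for every $k\in K$, so that $\|\cdot\|$ genuinely descends to $\R^n/K$ and the right-hand side of \eqref{heq-G} is well defined. Throughout, write $f_k:=f(k,\cdot)$ and let $\widehat{f_k}$ be its Euclidean Fourier transform; one may assume the last factor of \eqref{heq-G} is finite, as otherwise there is nothing to prove.

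The first main step is to establish the identity
\[
\int_{\R^n/K}\sum_{\sigma\in\widehat{K_\ell}}\|\ell\|^{2b}\,\|\pi_{\ell,\sigma}(f)\|_{\text{HS}}^2\,d\bar\ell
\;=\;c\int_{K\times\R^n}\|\xi\|^{2b}\,|\widehat{f_k}(\xi)|^2\,d\xi\,dk ,
\]
with $c>0$ a positive normalisation constant. To prove it I would introduce $F\in L^2(G)$ defined by $\widehat{F_k}(\xi)=\|2\pi\xi\|^{b}\,\widehat{f_k}(\xi)$, i.e.\ $F$ is obtained from $f$ by applying, in the $x$-variable, the fractional power of order $b$ of the Euclidean Laplacian; the $K$-invariance of $\|\cdot\|$ makes this operator commute with the $K$-action. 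Using the explicit model $\pi_{\ell,\sigma}(k,x)\xi(s)=e^{i\langle s\cdot\ell,x\rangle}\xi(k^{-1}s)$ on $\hh_{\ell,\sigma}$ one computes $\pi_{\ell,\sigma}(f)\xi(s)=\int_{K}\widehat{f_k}(-s\cdot\ell)\,\xi(k^{-1}s)\,dk$ (up to the normalisation built into $\chi_\ell$), and since $\|s\cdot\ell\|=\|\ell\|$ this yields $\pi_{\ell,\sigma}(F)=\|\ell\|^{b}\pi_{\ell,\sigma}(f)$, hence $\|\pi_{\ell,\sigma}(F)\|_{\text{HS}}^2=\|\ell\|^{2b}\|\pi_{\ell,\sigma}(f)\|_{\text{HS}}^2$. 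Feeding $F$ into Proposition~\ref{PT-CE} and using the Euclidean Plancherel theorem in $x$ then gives the displayed identity, after the customary reduction to $f$ in a dense subspace where these computations are legitimate.

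The second step is elementary. For each $k$, the Heisenberg inequality on $\R^n$ with exponents $a,b\ge1$ (cf.\ Theorem~\ref{theq-R} and \cite{Fol:Sit:97}), raised to the power $2ab/(a+b)$, gives
\[
\|f_k\|_2^2\le C_0\Big(\int_{\R^n}\|x\|^{2a}|f_k(x)|^2\,dx\Big)^{\frac{b}{a+b}}\Big(\int_{\R^n}\|\xi\|^{2b}|\widehat{f_k}(\xi)|^2\,d\xi\Big)^{\frac{a}{a+b}} .
\]
Integrating over $K$ against the normalised Haar measure and applying Hölder's inequality with the conjugate exponents $\frac{a+b}{b}$ and $\frac{a+b}{a}$, one obtains
\[
\|f\|_2^2=\int_K\|f_k\|_2^2\,dk\le C_0\Big(\int_{K\times\R^n}\|x\|^{2a}|f(k,x)|^2\,dx\,dk\Big)^{\frac{b}{a+b}}\Big(\int_{K\times\R^n}\|\xi\|^{2b}|\widehat{f_k}(\xi)|^2\,d\xi\,dk\Big)^{\frac{a}{a+b}} .
\]
Raising this to the power $\frac{a+b}{ab}=\frac1a+\frac1b$, extracting a square root, and replacing the last factor by the spectral quantity furnished by the identity above produces \eqref{heq-G} with an appropriate constant $C$.

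I expect the genuine obstacle to be the first step: one has to identify $\pi_{\ell,\sigma}(F)$ with $\|\ell\|^{b}\pi_{\ell,\sigma}(f)$ working directly in the induced picture on $\hh_{\ell,\sigma}$, keep careful track of the several normalisations (the measures $d\nu$ and $d\bar\ell$, the $2\pi$ in $\chi_\ell$, the constant in the Euclidean Plancherel theorem), and justify applying Proposition~\ref{PT-CE} to functions that are only known to be square-integrable. By contrast, nothing subtle happens in the second step: the point to notice is that, because the scalar inequality is first brought to the homogeneous form $\|f_k\|_2^2\le\cdots$, integrating it over $K$ and applying Hölder loses nothing, whereas integrating the inequality in its original inhomogeneous form would run into the wrong direction of Jensen's inequality.
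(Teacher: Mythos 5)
Your proposal is correct, but it follows a genuinely different route from the paper's. The paper works with a single coordinate: it applies the one--dimensional Heisenberg inequality in $x_1$ slice-wise, converts $\int |y_1|^2|\widehat{f_k}(y)|^2\,dy$ into $\tfrac{1}{4\pi^2}\|\partial f/\partial x_1\|_2^2$, feeds $\partial f/\partial x_1$ into Proposition~\ref{PT-CE}, and then computes in the induced picture that $\pi_{\ell,\sigma}(\partial f/\partial x_1)\xi(s)=i\langle \ell,s^{-1}e_1s\rangle\,\pi_{\ell,\sigma}(f)\xi(s)$, bounding $|\langle \ell,s^{-1}e_1s\rangle|\le \mathrm{const}\,\|\ell\|$ by compactness of $K$; this yields the weight $\|\ell\|^{2}$ only, and the exponents $2a$, $2b$ are then obtained by two separate H\"older bootstraps. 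You instead fix a $K$-invariant norm, prove the \emph{exact} weighted Plancherel identity $\int_{\R^n/K}\sum_\sigma\|\ell\|^{2b}\|\pi_{\ell,\sigma}(f)\|_{\text{HS}}^2\,d\bar\ell=c\int_{K\times\R^n}\|\xi\|^{2b}|\widehat{f_k}(\xi)|^2\,d\xi\,dk$ via $\pi_{\ell,\sigma}(F)=\|\ell\|^b\pi_{\ell,\sigma}(f)$ with $F=(-\Delta_x)^{b/2}f$, and then quote the full $n$-dimensional $(a,b)$ Euclidean inequality slice-wise together with H\"older over $K$. Your version buys the $\|\ell\|^{2b}$ weight directly (as an equality rather than through the H\"older upgrade from exponent $2$), and it makes explicit the point, left implicit in the paper, that $\|\ell\|$ must be taken $K$-invariant for the right-hand side of \eqref{heq-G} to be well defined; both arguments ultimately exploit compactness of $K$ in the same place (boundedness of $s\mapsto s^{-1}e_1s$ there, $\|s\cdot\ell\|=\|\ell\|$ here). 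The price you pay is twofold: you need the general $(a,b)$ inequality on $\R^n$ as a black box (it is in \cite{Fol:Sit:97} and \cite{Ban:Kum:15}, though not literally Theorem~\ref{theq-R}, which is only the one-dimensional $a=b=1$ case), and you must check that $F\in L^1(G)\cap L^2(G)$ on your dense subspace so that Proposition~\ref{PT-CE} and the integral formula for $\pi_{\ell,\sigma}(F)$ apply --- for $f\in\CS(G)$ and non-even $b$ the symbol $\|\xi\|^b$ is not smooth at the origin, so $F_k$ is not Schwartz, but it still decays like $\|x\|^{-n-b}$ and hence is integrable; you correctly flag this as the delicate step, and it is comparable in depth to the limit interchange the paper performs for $\pi_{\ell,\sigma}(\partial f/\partial x_1)$.
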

\begin{proof}
Define the norm $\|\cdot\|$ on $L^2(G)$ as
\begin{flalign*}
&&\|f\|:&=\left(\displaystyle\int_{K\times \R^n}{(1+\|x\|^{2a})\ |f(k,x)|^2}\ dx\ dk\right)^{1/2}&\\
&&&\quad +\left(\displaystyle\int_{\R^n/K}\sum_{\sigma\in \widehat{K_\ell}}{(1+\|\ell\|^{2b})\|\pi_{\ell,\sigma}(f)\|_{\text{HS}}^2}\ d\bar{\ell}\right)^{1/2}.&
\end{flalign*}
Then, the set $B=\{f\in L^2(G):\|f\|<\infty\}$ forms a Banach space which is contained in $L^2(G)$. If $0\neq f \in L^2(G)\setminus B$, then the right hand side of the inequality \eqref{heq-G} is always $+\infty$ and the inequality is trivially valid. 

Let $\CS(G)$ denote the space of $C^\infty$-functions which are rapidly decreasing on $G$. It can be shown that $\CS(G)$ is dense in $B$. Thus it suffices to prove the inequality \eqref{heq-G} for functions in $\CS(G)$. 

Let $f\in \CS(G)$. Assuming that both the integrals on right hand side of \eqref{heq-G} are finite, we have
\begin{flalign*}
&&\displaystyle\int_{\R^n}{|f(k,x)|^2}\ dx &< \infty,\ \text{for all}\ k\in K.&
\end{flalign*}
For $k \in K$, we define $f_k(x)=f(k,x)$, for every $x \in\R^n$. \\
Clearly, $f_k \in L^2(\R^n)$, for all $k\in K$. \\
Taking $x=(x_1,x_2,\ldots,x_n)$, $y=(y_1,y_2,\ldots,y_n)$ and proceeding as in the case of Euclidean motion group (see \cite[Theorem 2.2]{Ban:Kum:15}), we obtain
\begin{flalign}
\dfrac{\|f\|_2^2}{4\pi}&\leq\left(\displaystyle\int_{K\times \R^n}{\|x\|^{2a}\ |f(k,x)|^2}\ dx\ dk\right)^{\frac{1}{2a}}\left(\|f\|_2^2\right)^{\frac{1}{2}-\frac{1}{2a}} \nonumber&\\
&\qquad \times \left(\displaystyle\int_{K\times \R^n}{|y_1|^2\ |\widehat{f_k}(y)|^2}\ dy\ dk\right)^{1/2}. \label{step3-G}&
\end{flalign}
Now, using Plancherel formula on $\R^n$, we have
\begin{flalign}
&\displaystyle\int_{K\times \R^n}{|y_1|^2\ |\widehat{f_k}(y)|^2}\ dy\ dk \nonumber &\\
&=\displaystyle\int_{K\times \R^n}{|y_1|^2\ \left|\displaystyle\int_{\R^n}{f(k,x)\ e^{-2\pi i\langle{x,y}\rangle}}\ dx\right|^2}\ dy\ dk\nonumber &\\
&=\displaystyle\int_{K\times \R^n}{|y_1|^2\ |\F_{2,3,\ldots, n+1} f(k,y_1,y_2,\ldots,y_n)|^2}\ dy_1\ dy_2\ \ldots\ dy_n\ dk \nonumber &\\
&=\displaystyle\int_{K\times \R^n}{|y_1|^2\ |\F_{2} f(k,y_1,x_2,\ldots,x_n)|^2}\ dy_1\ dx_2\ \ldots\ dx_n\ dk, \label{step4-G}&
\end{flalign}
where $\F_i$ denotes the Fourier transform in the $i^{\text{th}}$ variable. \\
Since, $\dfrac{\partial f}{\partial x_1}\in \CS(G)$, we have 
\begin{flalign*}
&&\displaystyle\int_{\R}{\left|\dfrac{\partial f}{\partial x_1}(k,x_1,x_2,\ldots,x_n)\right|^2}\ dx_1&< \infty,&
\end{flalign*}
for all $k \in K$ and $x_i \in \R$ ($i=2,3,\ldots,n$).\\
So, $y_1\F_2 f(k,y_1,x_2,\ldots,x_n)\in L^2(\R)$ and
\begin{flalign*}
&&\left(\dfrac{\partial f}{\partial x_1}(k,x_1,x_2,\ldots,x_n)\right)^{\widehat{\ }}(y_1)&=2\pi i y_1\F_2 f(k,y_1,x_2,\ldots,x_n), &
\end{flalign*}
for all $k \in K$ and $x_i \in \R$ ($i=2,3,\ldots,n$). Then
\begin{flalign*}
\displaystyle\int_{\R}{|y_1|^2\ |\F_2 f(k,y_1,x_2,\ldots,x_n)|^2}\ dy_1&=\dfrac{1}{4\pi^2}\displaystyle\int_{\R}{\left|\dfrac{\partial f}{\partial x_1}(k,x_1,x_2,\ldots,x_n)\right|^2}\ dx_1. &
\end{flalign*}
Using Proposition \ref{PT-CE}, we have
\begin{flalign}
&\displaystyle\int_{K\times \R^n}{|y_1|^2\ |\F_2 f(k,y_1,x_2,\ldots,x_n)|^2}\ dy_1\ dx_2\ \ldots\ dx_n\ dk \nonumber&\\
&=\dfrac{1}{4\pi^2}\displaystyle\int_{K\times \R^n}{\left|\dfrac{\partial f}{\partial x_1}(k,x_1,x_2,\ldots,x_n)\right|^2}\ dx_1\ dx_2\ \ldots\ dx_n\ dk \nonumber&\\
&=\dfrac{1}{4\pi^2}\displaystyle\int_{\R^n/ K}\sum_{\sigma\in \widehat{K_\ell}}{\left\|\pi_{\ell,\sigma}\left(\dfrac{\partial f}{\partial x_1}\right)\right\|_{\text{HS}}^2}\ d\bar{\ell}. \label{step5-G}
\end{flalign}
Combining \eqref{step3-G}, \eqref{step4-G} and \eqref{step5-G}, we obtain
\begin{flalign*}
&&\dfrac{\|f\|_2^2}{2}&\leq\left(\displaystyle\int_{K\times \R^n}{\|x\|^{2a}\ |f(k,x)|^2}\ dx\ dk\right)^{\frac{1}{2a}}\left(\|f\|_2^2\right)^{\frac{1}{2}-\frac{1}{2a}}&\\
&&&\qquad \times\left(\displaystyle\int_{\R^n/ K}\sum_{\sigma\in \widehat{K_\ell}}{\left\|\pi_{\ell,\sigma}\left(\dfrac{\partial f}{\partial x_1}\right)\right\|_{\text{HS}}^2}\ d\bar{\ell}\right)^{1/2}, &
\end{flalign*}
which implies
\begin{flalign}
\dfrac{\|f\|_2^{1+\frac{1}{a}}}{2}&\leq\left(\displaystyle\int_{K\times \R^n}{\|x\|^{2a}\ |f(k,x)|^2}\ dx\ dk\right)^{\frac{1}{2a}}\left(\displaystyle\int_{\R^n/ K}\sum_{\sigma\in \widehat{K_\ell}}{\left\|\pi_{\ell,\sigma}\left(\dfrac{\partial f}{\partial x_1}\right)\right\|_{\text{HS}}^2}\ d\bar{\ell}\right)^{1/2}. \label{step6-G} &
\end{flalign}
For each non-zero linear form $\ell$ on $\R^n$ and each irreducible unitary representation $\sigma$ of $K_\ell$, consider the representation $\pi_{\ell,\sigma}$ realized on the Hilbert space $\hh_{\ell,\sigma}$ as
\begin{flalign*}
\pi_{\ell,\sigma}(k,x)\xi(s)=e^{i\langle \ell,s^{-1}xs\rangle}\xi(k^{-1}s)=e^{i \langle s.\ell,x\rangle}\xi(k^{-1}s),
\end{flalign*}
for $\xi\in \hh_{\ell,\sigma}$, $(k,x)\in G$ and $s \in K$. Since $f\in\CS(G)$, we observe that
\begin{flalign}
&\pi_{\ell,\sigma}\left(\dfrac{\partial f}{\partial x_1}\right)\xi(s)\nonumber&\\[4pt]
&=\displaystyle\int_{K\times \R^n}{\dfrac{\partial f}{\partial x_1}(k,x_1,x_2,\ldots,x_n)\ \pi_{\ell,\sigma}(k,x_1,x_2,\ldots,x_n)^\ast \xi(s)}\ dx_1\ dx_2\ \ldots\ dx_n\ dk \nonumber&\\[4pt]
&=\displaystyle\int_{K\times \R^n}{\lim_{h\rightarrow 0}{\left[\dfrac{f(k,x_1+h,x_2,\ldots,x_n)-f(k,x_1,x_2,\ldots,x_n)}{h}\right]}\ \pi_{\ell,\sigma}(k,x_1,x_2,\ldots,x_n)^\ast \xi(s)}\nonumber&\\*
&\hspace{200pt}\ dx_1\ dx_2\ \ldots\ dx_n\ dk \nonumber&\\
&=\lim_{h\rightarrow 0}\dfrac{1}{h}\left[\displaystyle\int_{K\times \R^n}{f(k,x_1+h,x_2,\ldots,x_n)\ \pi_{\ell,\sigma}(k,x_1,x_2,\ldots,x_n)^\ast \xi(s)}\ dx_1\ dx_2\ \ldots\ dx_n\ dk\right. \nonumber&\\[4pt]
&\qquad \left.-\displaystyle\int_{K\times \R^n}{f(k,x_1,x_2,\ldots,x_n)\ \pi_{\ell,\sigma}(k,x_1,x_2,\ldots,x_n)^\ast \xi(s)}\ dx_1\ dx_2\ \ldots\ dx_n\ dk\right] \nonumber&\\[4pt]
&=\lim_{h\rightarrow 0}\dfrac{1}{h}\left[\displaystyle\int_{K\times \R^n}{f(k,x_1,x_2,\ldots,x_n)\ \pi_{\ell,\sigma}(k,x_1-h,x_2,\ldots,x_n)^\ast \xi(s)}\ dx_1\ dx_2\ \ldots\ dx_n\ dk\right. \nonumber&\\
&\qquad \left.-\displaystyle\int_{K\times \R^n}{f(k,x_1,x_2,\ldots,x_n)\ \pi_{\ell,\sigma}(k,x_1,x_2,\ldots,x_n)^\ast \xi(s)}\ dx_1\ dx_2\ \ldots\ dx_n\ dk\right]. \label{step7-G}&
\end{flalign}
Let $e_1=\{1,0,0,\ldots,0\}\in \R^n$, then we can write
\begin{flalign*}
\pi_{\ell,\sigma}(k,x_1-h,x_2,\ldots,x_n)^\ast \xi(s)&=\pi_{\ell,\sigma}(k,x-he_1)^\ast \xi(s) &\\
&=e^{-i\langle \ell,s^{-1}(x-he_1)s\rangle}\ \xi(k^{-1}s) &\\
&=e^{i\langle \ell,s^{-1}(he_1)s\rangle}\ e^{-i\langle \ell,s^{-1}xs\rangle}\ \xi(k^{-1}s) &\\
&=e^{ih\langle \ell,s^{-1}e_1s\rangle}\ \pi_{\ell,\sigma}(k,x_1,x_2,\ldots,x_n)^\ast \xi(s). &
\end{flalign*}
Equation \eqref{step7-G} can be written as
\begin{flalign*}
&\pi_{\ell,\sigma}\left(\dfrac{\partial f}{\partial x_1}\right)\xi(s) &\\
&=\lim_{h\rightarrow 0}\left[\dfrac{e^{ih\langle \ell,s^{-1}e_1s\rangle}-1}{h}\right]\displaystyle\int_{K\times \R^n}{f(k,x_1,x_2,\ldots,x_n)}&\\
&\hspace{140pt}\times {\pi_{\ell,\sigma}(k,x_1,x_2,\ldots,x_n)^\ast \xi(s)}\ dx_1\ dx_2\ \ldots\ dx_n\ dk &\\[4pt]
&=\lim_{h\rightarrow 0}\left[\dfrac{e^{ih\langle \ell,s^{-1}e_1s\rangle}-1}{h}\right]\pi_{\ell,\sigma}(f)\xi(s) &\\*
&=i\langle \ell,s^{-1}e_1s\rangle\ \pi_{\ell,\sigma}(f)\xi(s). &
\end{flalign*}
Since $s\mapsto s^{-1}e_1s$ is a continuous map from $K$ to $\R^n$, so $\{s^{-1}e_1s : s\in K\}$ is bounded. For any orthonormal basis $\{\xi_j\}$ of $\hh_{\ell,\sigma}$, we have
\begin{flalign*}
\left\|\pi_{\ell,\sigma}\left(\dfrac{\partial f}{\partial x_1}\right)\right\|_{\text{HS}}^2 &=\sum_{j}\displaystyle\int_K{|i\langle \ell,s^{-1}e_1s\rangle\ \pi_{\ell,\sigma}(f)\xi_j(s)|^2}\ ds &\\
&\leq const.\ \|\ell\|^2\sum_{j}\displaystyle\int_K{|\pi_{\ell,\sigma}(f)\xi_j(s)|^2}\ ds =const.\ \|\ell\|^2\|\pi_{\ell,\sigma}(f)\|_{\text{HS}}^2. &
\end{flalign*}
So, \eqref{step6-G} can be written as
\begin{flalign}
\|f\|_2^{1+\frac{1}{a}}&\leq C\left(\displaystyle\int_{K\times \R^n}{\|x\|^{2a}\ |f(k,x)|^2}\ dx\ dk\right)^{\frac{1}{2a}}\left(\displaystyle\int_{\R^n/ K}\sum_{\sigma\in \widehat{K_\ell}}{\|\ell\|^2\left\|\pi_{\ell,\sigma}(f)\right\|_{\text{HS}}^2}\ d\bar{\ell}\right)^{1/2}. \label{step8-G}&
\end{flalign}
Using H\"older's inequality, we have
\begin{flalign*}
&\left(\displaystyle\int_{\R^n/ K}\sum_{\sigma\in \widehat{K_\ell}}{\|\ell\|^{2b}\|\pi_{\ell,\sigma}(f)\|_{\text{HS}}^2}\ d\bar{\ell}\right)^{\frac{1}{b}}\left(\displaystyle\int_{\R^n/ K}\sum_{\sigma\in \widehat{K_\ell}}{\|\pi_{\ell,\sigma}(f)\|_{\text{HS}}^2}\ d\bar{\ell}\right)^{1-\frac{1}{b}}&\\
&\geq  \displaystyle\int_{\R^n/ K}\sum_{\sigma\in \widehat{K_\ell}}{\|\ell\|^2\ \|\pi_{\ell,\sigma}(f)\|_{\text{HS}}^2}\ d\bar{\ell}, &
\end{flalign*}
which implies
\begin{flalign}
&\displaystyle\int_{\R^n/ K}\sum_{\sigma\in \widehat{K_\ell}}{\|\ell\|^2\ \|\pi_{\ell,\sigma}(f)\|_{\text{HS}}^2}\ d\bar{\ell}& \leq \left(\displaystyle\int_{\R^n/ K}\sum_{\sigma\in \widehat{K_\ell}}{\|\ell\|^{2b}\|\pi_{\ell,\sigma}(f)\|_{\text{HS}}^2}\ d\bar{\ell}\right)^{\frac{1}{b}}\left(\|f\|_2^2\right)^{1-\frac{1}{b}}.   \label{step9-G}&
\end{flalign}
Combining \eqref{step8-G} and \eqref{step9-G}, we obtain
\begin{flalign*}
\|f\|_2^{\left(\frac{1}{a}+\frac{1}{b}\right)}&\leq C\left(\displaystyle\int_{K\times \R^n}{\|x\|^{2a}\ |f(k,x)|^2}\ dx\ dk\right)^{\frac{1}{2a}}\left(\displaystyle\int_{\R^n/ K}\sum_{\sigma\in \widehat{K_\ell}}{\|\ell\|^{2b}\|\pi_{\ell,\sigma}(f)\|_{\text{HS}}^2}\ d\bar{\ell}\right)^{\frac{1}{2b}}.& 
\end{flalign*}
\end{proof}
%%%%%%%%%%
%%%%%%%
\section{Continuous Gabor Transform}
Let $\ch$ be a separable Hilbert space and let $\B(\ch)$ denotes the set of all bounded linear operators on $\ch$. An operator $T\in \B(\ch)$ is called Hilbert-Schmidt operator if and only if 
\begin{flalign*}
\sum_{k}{\|Te_k\|^2}<\infty,
\end{flalign*}
for some, and hence for any, orthonormal basis $\{e_k\}$ of $\ch$. We denote the set of all Hilbert-Schmidt operators on $\ch$ by $\text{HS}(\ch)$. For each $T\in \text{HS}(\ch)$, the Hilbert-Schmidt norm $\|T\|_{\text{HS}}$ of $T$ is defined as
\begin{flalign*}
\|T\|_{\text{HS}}^2:=\sum_{k}{\|Te_k\|^2}.
\end{flalign*}
Also, $\text{HS}(\ch)$ forms a Hilbert space with the inner product given by
\begin{flalign*}
\langle T,S\rangle_{\text{HS}(\ch)}=\tr{(S^\ast T)}.
\end{flalign*}
For more details, refer to \cite{Fol:94}. 

Let $G$ be a second countable, non-abelian, unimodular and type I group. Let $dx$ be the Haar measure on $G$. Let $d\pi$ be the Plancherel meaure on $\widehat{G}$. For each $(x,\pi)\in G\times \widehat{G}$, let 
\begin{flalign*}
\ch_{(x,\pi)}=\pi(x)\text{HS}(\ch_\pi),
\end{flalign*}
where $\pi(x)\text{HS}(\ch_\pi)=\{\pi(x)T:T\in \text{HS}(\ch_\pi)\}$. Then, $\ch_{(x,\pi)}$ is a Hilbert space with the inner product given by
\begin{flalign*}
\langle \pi(x)T,\pi(x)S\rangle_{\ch_{(x,\pi)}}=\tr{(S^\ast T)}=\langle T,S\rangle_{\text{HS}(\ch_\pi)}.
\end{flalign*}
One can easily verify that $\ch_{(x,\pi)}=\text{HS}(\ch_\pi)$ for all $(x,\pi)\in G\times \widehat{G}$. The family $\{\ch_{(x,\pi)}\}_{(x,\pi)\in G\times \widehat{G}}$ of Hilbert spaces indexed by $G\times \widehat{G}$ is a field of Hilbert spaces over $G\times \widehat{G}$. Let $\ch^2(G \times \widehat{G})$ denote the direct integral of $\{\ch_{(x,\pi)}\}_{(x,\pi)\in G\times \widehat{G}}$ with respect to the product measure $dx\ d\pi$, i.e., the space of all measurable vector fields $F$ on $G\times \widehat{G}$ such that
\begin{flalign*}
\|F\|_{\ch^2(G\times \widehat{G})}^2=\int_{G\times \widehat{G}}{\|F(x,\pi)\|_{(x,\pi)}^2}\ dx\ d\pi<\infty.
\end{flalign*}
$\ch^2(G \times \widehat{G})$ is a Hilbert space with the inner product given by
\begin{flalign*}
\langle F,K\rangle_{\ch^2(G\times \widehat{G})}=\int_{G\times \widehat{G}}{\tr{[F(x,\pi)K(x,\pi)^\ast]}}\ dx\ d\pi.
\end{flalign*}

Let $f \in C_c(G)$, the set of all continuous complex-valued functions on $G$ with compact supports and $\psi$ be a fixed non-zero function in $L^2(G)$ which is sometimes called a window function. For $(x,\pi) \in G\times \widehat{G}$, the continuous \textit{Gabor Transform} of $f$ with respect to the window function $\psi$ can be defined as a measurable field of operators on $G\times \widehat{G}$ by
\begin{flalign}
&&G_\psi f(x,\pi)&:=\int_{G}{f(y)\ \overline{\psi(x^{-1}y)}\ \pi(y)^\ast}\ dy. \label{opvalued}&
\end{flalign}
The operator-valued integral \eqref{opvalued} is considered in the weak-sense, i.e., for each $(x,\pi) \in G\times \widehat{G}$ and $\xi, \eta\in \ch_\pi$, we have
\begin{flalign*}
\langle G_\psi f(x,\pi)\xi,\eta\rangle &=\int_{G}{f(y)\ \overline{\psi(x^{-1}y)}\ \langle\pi(y)^\ast \xi,\eta\rangle}\ dy. 
\end{flalign*}
For each $x\in G$, define $f^\psi_{x}:G \rightarrow \C$ by
\begin{flalign*}
f^\psi_x(y):=f(y)\ \overline{\psi(x^{-1}y)}. 
\end{flalign*}
Since, $f \in C_c(G)$ and $\psi \in L^2(G)$, we have $f^\psi_x\in L^1(G)\cap L^2(G)$, for all $x \in G$. The Fourier transform is given by
\begin{flalign*}
&&\widehat{f^\psi_x}(\pi)&=\int_{G}{f^\psi_x(y)\ \pi(y)^\ast}\ dy =\int_{G}{f(y)\ \overline{\psi(x^{-1}y)}\ \pi(y)^\ast}\ dy =G_\psi f(x,\pi). &
\end{flalign*}
Also, using Plancherel theorem \cite[Theorem $7.44$]{Fol:94}, we see that $\widehat{f^\psi_x}(\pi)$ is a Hilbert-Schmidt operator for almost all $\pi\in \widehat{G}$. Therefore, $G_\psi f(x,\pi)$ is a Hilbert-Schmidt operator for all $x \in G$ and for almost all $\pi\in \widehat{G}$. As in \cite{Far:Kam:12}, for $f \in C_c(G)$ and a window function $\psi \in L^2(G)$, we have
\begin{flalign*}
\|G_\psi f\|_{\ch^2(G\times \widehat{G})} =\|\psi\|_2\ \|f\|_2.
\end{flalign*}
The above equality shows that the continuous Gabor transform $G_\psi : C_c(G) \rightarrow \ch^2(G\times \widehat{G})$ defined by $f \mapsto G_\psi f$ is a multiple of an isometry. So, we can extend $G_\psi$ uniquely to a bounded linear operator from $L^2(G)$ into a closed subspace $H$ of $\ch^2(G\times \widehat{G})$ which we still denote by $G_\psi$ and this extension satisfies
\begin{flalign}
&&\|G_\psi f\|_{\ch^2(G\times \widehat{G})} &=\|\psi\|_2\ \|f\|_2, \label{GT-norm}&
\end{flalign}
for each $f\in L^2(G)$. We now prove an important lemma.
%%%%%
\begin{lem}\label{GT-relation}
Let $f \in L^2(G)$ and $\psi \in L^2(G)$ be a window function. Then
\begin{flalign*}
G_\psi f(x,\pi)=\widehat{f^\psi_x}(\pi).
\end{flalign*}
\end{lem}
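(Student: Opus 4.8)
The plan is to deduce the identity from the definition \eqref{opvalued} in the case $f\in C_c(G)$ and then to transfer it to a general $f\in L^2(G)$ by density. When $f\in C_c(G)$ the assertion is immediate: as already noted, $f^\psi_x\in L^1(G)\cap L^2(G)$ for every $x$, and
$\widehat{f^\psi_x}(\pi)=\int_G f(y)\,\overline{\psi(x^{-1}y)}\,\pi(y)^\ast\,dy=G_\psi f(x,\pi)$
by \eqref{opvalued}. So I would fix $f\in L^2(G)$, choose $f_n\in C_c(G)$ with $f_n\to f$ in $L^2(G)$, and pass to the limit on both sides of the already-known identity $G_\psi f_n(x,\pi)=\widehat{(f_n)^\psi_x}(\pi)$.

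\emph{Matching the two limits.} On the left, \eqref{GT-norm} shows $G_\psi$ is a $\|\psi\|_2$-multiple of an isometry, so $G_\psi f_n\to G_\psi f$ in $\ch^2(G\times\widehat G)$, and hence a subsequence satisfies $G_\psi f_{n_k}(x,\pi)\to G_\psi f(x,\pi)$ in $\text{HS}(\ch_\pi)$ for almost every $(x,\pi)$. On the right, for \emph{every} fixed $x$ the Cauchy--Schwarz inequality gives $\|(f_n)^\psi_x-f^\psi_x\|_1\le\|f_n-f\|_2\,\|\psi\|_2\to0$, so $(f_n)^\psi_x\to f^\psi_x$ in $L^1(G)$ and therefore $\widehat{(f_n)^\psi_x}(\pi)=G_\psi f_n(x,\pi)\to\widehat{f^\psi_x}(\pi)$ in operator norm for every $x$ and every $\pi$. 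Along the subsequence the two limits coincide, which yields $G_\psi f(x,\pi)=\widehat{f^\psi_x}(\pi)$ for almost every $(x,\pi)\in G\times\widehat G$, the desired conclusion.

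\emph{Measurable-field consistency.} To make precise that $(x,\pi)\mapsto\widehat{f^\psi_x}(\pi)$ is a genuine element of the direct integral $\ch^2(G\times\widehat G)$, I would record, using Tonelli's theorem, the unimodularity of $G$ (via the substitution $u=x^{-1}y$ in the inner integral), and the Plancherel theorem \cite[Theorem 7.44]{Fol:94} applied for fixed $x$ (legitimate since $f^\psi_x\in L^1(G)\cap L^2(G)$ for a.e.\ $x$, being a product of two $L^2$-functions), the identity
\begin{align*}
\int_G\int_{\widehat G}\|\widehat{f^\psi_x}(\pi)\|_{\text{HS}}^2\,d\pi\,dx
&=\int_G\|f^\psi_x\|_2^2\,dx
=\int_G\int_G|f(y)|^2\,|\psi(x^{-1}y)|^2\,dx\,dy\\
&=\|\psi\|_2^2\,\|f\|_2^2 .
\end{align*}
Thus $f^\psi_x\in L^2(G)$ and $\widehat{f^\psi_x}(\pi)\in\text{HS}(\ch_\pi)$ for a.e.\ $(x,\pi)$, and the field $(x,\pi)\mapsto\widehat{f^\psi_x}(\pi)$, being a pointwise operator-norm limit of the measurable fields $(x,\pi)\mapsto G_\psi f_n(x,\pi)$, is itself measurable and lies in $\ch^2(G\times\widehat G)$ with norm $\|\psi\|_2\,\|f\|_2$, consistent with \eqref{GT-norm}.

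The only delicate point I anticipate is this last bookkeeping: verifying that $(x,\pi)\mapsto\widehat{f^\psi_x}(\pi)$ defines a bona fide measurable vector field on $G\times\widehat G$ (well defined modulo null sets), rather than merely a pointwise object — which is exactly what approximation by $C_c(G)$-functions together with the two compatible limits above is designed to supply. The unimodularity substitution and the fibrewise application of Plancherel are routine.
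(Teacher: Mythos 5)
Your proposal is correct and follows essentially the same route as the paper: prove the identity directly from \eqref{opvalued} for $f\in C_c(G)$, then approximate a general $f\in L^2(G)$ by $C_c(G)$-functions and match the two limits, using \eqref{GT-norm} on the Gabor side and the fibrewise Plancherel/unimodularity computation $\int_G\int_{\widehat G}\|\widehat{f^\psi_x}(\pi)-\widehat{(f_n)^\psi_x}(\pi)\|_{\text{HS}}^2\,d\pi\,dx=\|\psi\|_2^2\,\|f-f_n\|_2^2$ on the Fourier side, which is exactly the paper's argument. The only (harmless) variation is that you identify the right-hand limit via $L^1$-convergence and operator norms along an a.e.-convergent subsequence, whereas the paper concludes directly from uniqueness of limits in $\ch^2(G\times\widehat G)$.
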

\begin{proof}
Let $f \in L^2(G)$. Since $C_c(G)$ is dense in $L^2(G)$, there exists a sequence $\{\phi_n\}$ in $C_c(G)$ such that $f=\displaystyle\lim_{n\rightarrow \infty}{\phi_n}$ in the $L^2$-norm. It follows that
\begin{flalign*}
G_\psi : L^2(G) \rightarrow H \subseteq \ch^2(G\times \widehat{G}) 
\end{flalign*}
satisfies $G_\psi f= \lim\limits_{n\rightarrow \infty}{G_\psi\phi_n}$ in the $\ch^2(G\times \widehat{G})$-norm and 
\begin{flalign*}
G_\psi \phi_n(x,\pi)=\widehat{(\phi_n)^\psi_x}(\pi).
\end{flalign*}
\begin{flalign*}
&\text{Now,}&\|G_\psi f-G_\psi \phi_n\|_{\ch^2(G\times \widehat{G})}^2&=\int_{G}\int_{\widehat{G}}{\|G_\psi f(x,\pi)-G_\psi \phi_n(x,\pi)\|_{\text{HS}}^2}\ dx\ d\pi &\\
&&&=\int_{G}\int_{\widehat{G}}{\|G_\psi f(x,\pi)-\widehat{(\phi_n)^\psi_x}(\pi)\|_{\text{HS}}^2}\ dx\ d\pi &
\end{flalign*}
\begin{flalign*}
&\text{and}&\|\psi\|_2^2\ \|f-\phi_n\|_2^2 &=\int_{G}{|\psi(x)|^2}\ dx\ \int_{G}{|(f-\phi_n)(y)|^2}\ dy &\\
&&&=\int_{G}\int_{G}{|(f-\phi_n)(y)|^2\ |\overline{\psi(x^{-1}y)}|^2}\ dx\ dy &\\
&&&=\int_{G}\int_{G}{|f(y)\ \overline{\psi(x^{-1}y)}-\phi_n(y)\ \overline{\psi(x^{-1}y)}|^2}\ dx\ dy &\\
&&&=\int_{G}\int_{G}{|(f^\psi_x-(\phi_n)^\psi_x)(y)|^2}\ dx\ dy &\\
&&&=\int_{G}\int_{\widehat{G}}{\|\widehat{f^\psi_x}(\pi)-\widehat{(\phi_n)^\psi_x}(\pi)\|_{\text{HS}}^2}\ dx\ d\pi. &
\end{flalign*}
Hence, $G_\psi f(x,\pi)=\widehat{f^\psi_x}(\pi)$ for all $f \in L^2(G)$.
\end{proof}

We now establish Heisenberg uncertainty inequality for Gabor transform. Let $G=K\ltimes \R^n$, where $K$ is a separable unimodular locally compact group of type I. The continuous \textit{Gabor Transform} of $f$ with respect to the window function $\psi$ can be defined as follows:
\begin{flalign}
&&G_\psi f(u,t,\gamma,\sigma)&:=\int_{G}{f^\psi_{u,t}(k,x)\ \pi_{\gamma,\sigma}(k,x)^\ast}\ dx\ dk, \label{opvalued-CE}&
\end{flalign}
where $f^\psi_{u,t}(k,x)=f(k,x)\ \overline{\psi(ku^{-1},x-t)}$, $(u,t)\in G$, $\gamma\in \widehat{\R^n}$ and $\sigma \in \widehat{K_\gamma}$. Also, the equality in Lemma \ref{GT-relation} takes the following form:
\begin{flalign}
G_\psi f(u,t,\gamma,\sigma)=\pi_{\gamma,\sigma}(f^\psi_{u,t}). \label{GT-relation-CE}
\end{flalign}
%%%%%%%%%%
\begin{thm}
Let $G=K\ltimes \R^n$ satisfies the inequality \eqref{heq-FT-G} and $\psi$ be a window function. For $a,b \geq 1$, we have 
\begin{flalign}
&\|\psi\|_2^{\frac{1}{b}}\ \|f\|_2^{\left(\frac{1}{a}+\frac{1}{b}\right)}\leq C\left(\displaystyle\int_{K\times \R^n}{\|x\|^{2a}\ |f(k,x)|^2}\ dx\ dk\right)^{\frac{1}{2a}}\nonumber&\\
&\qquad \times \left(\displaystyle\int_{K\times \R^n}\displaystyle\int_{\widehat{\R^n}/G}\displaystyle\int_{\widehat{K_\gamma}}{\|\gamma\|^{2b}\ \|G_\psi f(u,t,\gamma,\sigma)\|_{\text{HS}}^2}\ d\mu_\gamma(\sigma)\ d\overline{\mu}_{\R^n}(\overline{\gamma})\ du\ dt\right)^{\frac{1}{2b}}. \label{heq-GT-CE}&
\end{flalign}
\end{thm}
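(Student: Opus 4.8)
The plan is to reduce the statement to the Fourier-transform inequality \eqref{heq-FT-G} by applying that inequality, for each fixed $(u,t)\in G$, to the windowed translate $f^\psi_{u,t}\in L^2(G)$ rather than to $f$ itself, and then integrating the resulting family of estimates over $(u,t)\in K\times\R^n$. By the identity \eqref{GT-relation-CE} we have $\pi_{\gamma,\sigma}(f^\psi_{u,t})=G_\psi f(u,t,\gamma,\sigma)$, so \eqref{heq-FT-G} applied to $f^\psi_{u,t}$ reads, for almost every $(u,t)$,
\[
\|f^\psi_{u,t}\|_2^{\frac1a+\frac1b}\ \le\ C\,A(u,t)^{\frac1{2a}}\,B(u,t)^{\frac1{2b}},
\]
where
\begin{align*}
A(u,t)&=\int_{K\times\R^n}\|x\|^{2a}\,|f(k,x)|^2\,|\psi(ku^{-1},x-t)|^2\ dx\ dk,\\
B(u,t)&=\int_{\widehat{\R^n}/G}\int_{\widehat{K_\gamma}}\|\gamma\|^{2b}\,\|G_\psi f(u,t,\gamma,\sigma)\|_{\text{HS}}^2\ d\mu_\gamma(\sigma)\ d\overline{\mu}_{\R^n}(\overline{\gamma}).
\end{align*}
This is legitimate: \eqref{heq-FT-G} is assumed to hold for every function in $L^2(G)$ (it is vacuous when its right-hand side is $+\infty$), and $f^\psi_{u,t}\in L^2(G)$ for a.e.\ $(u,t)$ because $\int_{K\times\R^n}\|f^\psi_{u,t}\|_2^2\ du\ dt=\|\psi\|_2^2\,\|f\|_2^2<\infty$ by \eqref{GT-norm} (unwound through the Plancherel formula), an identity I shall use again below.

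Next I would homogenise the exponents so that the left-hand side carries an honest $L^2$-norm. Raising the displayed inequality to the power $r:=\frac{2ab}{a+b}$ --- for which $r(\frac1a+\frac1b)=2$, $\frac{r}{2a}=\frac{b}{a+b}$ and $\frac{r}{2b}=\frac{a}{a+b}$ --- gives $\|f^\psi_{u,t}\|_2^2\le C^r\,A(u,t)^{\frac{b}{a+b}}\,B(u,t)^{\frac{a}{a+b}}$ for a.e.\ $(u,t)$. Integrating this over $(u,t)\in K\times\R^n$, using $\int_{K\times\R^n}\|f^\psi_{u,t}\|_2^2\ du\ dt=\|\psi\|_2^2\,\|f\|_2^2$ on the left, and applying H\"older's inequality with the conjugate exponents $\frac{a+b}{b}$ and $\frac{a+b}{a}$ on the right, I obtain
\[
\|\psi\|_2^2\,\|f\|_2^2\ \le\ C^r\left(\int_{K\times\R^n}A(u,t)\ du\ dt\right)^{\frac{b}{a+b}}\left(\int_{K\times\R^n}B(u,t)\ du\ dt\right)^{\frac{a}{a+b}}.
\]

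The crux is then the evaluation $\int_{K\times\R^n}A(u,t)\ du\ dt=\|\psi\|_2^2\int_{K\times\R^n}\|x\|^{2a}\,|f(k,x)|^2\ dx\ dk$. By Tonelli's theorem one may interchange the orders of integration, and for each fixed $(k,x)$ the map $(u,t)\mapsto(ku^{-1},\,x-t)$ is a measure-preserving bijection of $K\times\R^n$ onto itself, since $u\mapsto ku^{-1}$ preserves the Haar measure of the unimodular group $K$ and $t\mapsto x-t$ preserves Lebesgue measure on $\R^n$; hence $\int_{K\times\R^n}|\psi(ku^{-1},x-t)|^2\ du\ dt=\|\psi\|_2^2$, and the evaluation follows. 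I expect this interchange-plus-invariance computation to be the only step requiring genuine care; the rest is formal rearrangement. In the same way, $\int_{K\times\R^n}B(u,t)\ du\ dt$ equals exactly the iterated integral $D$ that appears (as $D^{1/(2b)}$) on the right of \eqref{heq-GT-CE}. Substituting the two evaluations, the last display becomes
\[
\|\psi\|_2^2\,\|f\|_2^2\ \le\ C^r\,\|\psi\|_2^{\frac{2b}{a+b}}\left(\int_{K\times\R^n}\|x\|^{2a}\,|f(k,x)|^2\ dx\ dk\right)^{\frac{b}{a+b}}D^{\frac{a}{a+b}}.
\]

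Finally, since the window $\psi$ is nonzero I divide through by $\|\psi\|_2^{\frac{2b}{a+b}}$, use $2-\frac{2b}{a+b}=\frac{2a}{a+b}$, and raise both sides to the power $\frac1r=\frac{a+b}{2ab}$; since $\frac{a+b}{ab}=\frac1a+\frac1b$, this is exactly \eqref{heq-GT-CE}, with the same constant $C$. If either integral on the right-hand side of \eqref{heq-GT-CE} is infinite the inequality holds trivially, so that case needs no separate argument; and the measurability in $(u,t)$ of $\|f^\psi_{u,t}\|_2^2$, $A(u,t)$ and $B(u,t)$ --- required in order to integrate the pointwise bounds --- follows from $G_\psi f$ being a measurable field of operators on $G\times\widehat{G}$ together with Tonelli's theorem.
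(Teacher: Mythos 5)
Your argument is correct and follows the paper's strategy: apply the Fourier-transform inequality \eqref{heq-FT-G} to the windowed function $f^\psi_{u,t}$, identify $\pi_{\gamma,\sigma}(f^\psi_{u,t})$ with $G_\psi f(u,t,\gamma,\sigma)$ via \eqref{GT-relation-CE}, integrate over $(u,t)$, and extract $\|\psi\|_2^2$ from the $\|x\|$-weighted integral using the measure-preserving map $(u,t)\mapsto(ku^{-1},x-t)$. The only divergence is bookkeeping of exponents: the paper invokes \eqref{heq-FT-G} with $a=b=1$, integrates via the Cauchy--Schwarz inequality, and then upgrades the weights $\|x\|^{2},\|\gamma\|^{2}$ to $\|x\|^{2a},\|\gamma\|^{2b}$ by two applications of H\"older's inequality at the end, whereas you invoke it for general $a,b$, homogenise the exponents, and apply H\"older with exponents $\tfrac{a+b}{b},\tfrac{a+b}{a}$ in the $(u,t)$-integration --- both are valid and yield the stated inequality.
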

\begin{proof}
Assume that both integrals on the right-hand side of \eqref{heq-GT-CE} are finite. Since $f^\psi_{u,t} \in L^2(G)$ for all $(u,t) \in G$, so by using inequality \eqref{heq-FT-G} for $a=b=1$, we have
\begin{flalign}
\|f^\psi_{u,t}\|_2^2 &\leq C\left(\displaystyle\int_{K\times \R^n}{\|x\|^2\ |f^\psi_{u,t}(k,x)|^2}\ dx\ dk\right)^{1/2} \nonumber &\\
&\qquad \times \left(\displaystyle\int_{\widehat{\R^n}/G}\displaystyle\int_{\widehat{K_\gamma}}{\|\gamma\|^2\ \|\pi_{\gamma,\sigma}(f^\psi_{u,t})\|_{\text{HS}}^2}\ d\mu_\gamma(\sigma)\ d\overline{\mu}_{\R^n}(\overline{\gamma})\right)^{1/2}. \label{step1-CE}&
\end{flalign}
Also, by Proposition \ref{PT-CE} and \eqref{GT-relation-CE}, we have
\begin{flalign}
&\displaystyle\int_{\widehat{\R^n}/G}\displaystyle\int_{\widehat{K_\gamma}}{\|G_\psi f(u,t,\gamma,\sigma)\|_{\text{HS}}^2}\ d\mu_\gamma(\sigma)\ d\overline{\mu}_{\R^n}(\overline{\gamma}) \nonumber&\\*
&=\displaystyle\int_{\widehat{\R^n}/G}\displaystyle\int_{\widehat{K_\gamma}}{\|\pi_{\gamma,\sigma}(f^\psi_{u,t})\|_{\text{HS}}^2}\ d\mu_\gamma(\sigma)\ d\overline{\mu}_{\R^n}(\overline{\gamma}) =\|f^\psi_{u,t}\|_2^2. \label{step2-CE}&
\end{flalign}
On combining \eqref{step1-CE} and \eqref{step2-CE}, we obtain
\begin{flalign*}
&\displaystyle\int_{\widehat{\R^n}/G}\displaystyle\int_{\widehat{K_\gamma}}{\|G_\psi f(u,t,\gamma,\sigma)\|_{\text{HS}}^2}\ d\mu_\gamma(\sigma)\ d\overline{\mu}_{\R^n}(\overline{\gamma}) &\\
& \leq C\left(\displaystyle\int_{K\times \R^n}{\|x\|^2\ |f^\psi_{u,t}(k,x)|^2}\ dx\ dk\right)^{1/2}&\\
&\qquad \times \left(\displaystyle\int_{\widehat{\R^n}/G}\displaystyle\int_{\widehat{K_\gamma}}{\|\gamma\|^2\ \|\pi_{\gamma,\sigma}(f^\psi_{u,t})\|_{\text{HS}}^2}\ d\mu_\gamma(\sigma)\ d\overline{\mu}_{\R^n}(\overline{\gamma})\right)^{1/2}, 
\end{flalign*}
which holds for almost all $(u,t) \in G$. Integrating both sides with respect to $du\ dt$ and then applying Cauchy-Schwarz inequality, we have
\begin{flalign*}
&\displaystyle\int_{K\times \R^n}\displaystyle\int_{\widehat{\R^n}/G}\displaystyle\int_{\widehat{K_\gamma}}{\left\|G_\psi f(u,t,\gamma,\sigma)\right\|^2}\ d\mu_\gamma(\sigma)\ d\overline{\mu}_{\R^n}(\overline{\gamma})\ du\ dt &\\
& \leq C\left(\displaystyle\int_{K\times \R^n}\displaystyle\int_{K\times \R^n}{\|x\|^2\ |f^\psi_{t,u}(k,x)|^2}\ dx\ dk\ du\ dt\right)^{1/2} &\\
&\qquad \times \left(\displaystyle\int_{K\times \R^n}\displaystyle\int_{\widehat{\R^n}/G}\displaystyle\int_{\widehat{K_\gamma}}{\|\gamma\|^2\ \|\pi_{\gamma,\sigma}(f^\psi_{u,t})\|_{\text{HS}}^2}\ d\mu_\gamma(\sigma)\ d\overline{\mu}_{\R^n}(\overline{\gamma})\ du\ dt\right)^{1/2}  &\\
&=C \left(\displaystyle\int_{K\times \R^n}\displaystyle\int_{K\times \R^n}{\|x\|^2\ |f(k,x)\ \overline{\psi(ku^{-1},x-t)}|^2}\ dx\ dk\ du\ dt\right)^{1/2} &\\*
&\qquad \times \left(\displaystyle\int_{K\times \R^n}\displaystyle\int_{\widehat{\R^n}/G}\displaystyle\int_{\widehat{K_\gamma}}{\|\gamma\|^2\ \|\pi_{\gamma,\sigma}(f^\psi_{u,t})\|_{\text{HS}}^2}\ d\mu_\gamma(\sigma)\ d\overline{\mu}_{\R^n}(\overline{\gamma})\ du\ dt\right)^{1/2}  &\\
&= C\|\psi\|_2\left(\displaystyle\int_{K\times \R^n}{\|x\|^2\ |f(k,x)|^2}\ dx\ dk\right)^{1/2} &\\*
&\qquad \times\left(\displaystyle\int_{K\times \R^n}\displaystyle\int_{\widehat{\R^n}/G}\displaystyle\int_{\widehat{K_\gamma}}{\|\gamma\|^2\ \|\pi_{\gamma,\sigma}(f^\psi_{u,t})\|_{\text{HS}}^2}\ d\mu_\gamma(\sigma)\ d\overline{\mu}_{\R^n}(\overline{\gamma})\ du\ dt\right)^{1/2}.  &
\end{flalign*}
Using \eqref{GT-norm} and \eqref{GT-relation-CE}, we get
\begin{flalign}
&\|\psi\|_2\ \|f\|_2^2\nonumber &\\*
&\leq C\left(\displaystyle\int_{K\times \R^n}{\|x\|^2\ |f(k,x)|^2}\ dx\ dk\right)^{1/2} \nonumber&\\
&\qquad \times\left(\displaystyle\int_{K\times \R^n}\displaystyle\int_{\widehat{\R^n}/G}\displaystyle\int_{\widehat{K_\gamma}}{\|\gamma\|^2\ \|G_\psi f(u,t,\gamma,\sigma)\|_{\text{HS}}^2}\ d\mu_\gamma(\sigma)\ d\overline{\mu}_{\R^n}(\overline{\gamma})\ du\ dt\right)^{1/2}.  &\label{step3-CE}
\end{flalign}
Applying H\"older's inequality, we have
\begin{flalign}
&\left(\displaystyle\int_{K\times \R^n}{\|x\|^{2a}\ |f(k,x)|^2}\ dx\ dk\right)^{\frac{1}{a}}\left(\displaystyle\int_{K\times \R^n}{|f(k,x)|^2}\ dx\ dk\right)^{1-\frac{1}{a}}\nonumber& \\
&\geq \displaystyle\int_{K\times \R^n}{\|x\|^2\ |f(k,x)|^2}\ dx\ dk   \label{step4-CE}&
\end{flalign}
and
\begin{flalign}
&\left(\displaystyle\int_{K\times \R^n}\displaystyle\int_{\widehat{\R^n}/G}\displaystyle\int_{\widehat{K_\gamma}}{\|\gamma\|^{2b}\ \|G_\psi f(u,t,\gamma,\sigma)\|_{\text{HS}}^2}\ d\mu_\gamma(\sigma)\ d\overline{\mu}_{\R^n}(\overline{\gamma})\ du\ dt\right)^{\frac{1}{b}}\nonumber&\\*
&\qquad \times\left(\displaystyle\int_{K\times \R^n}\displaystyle\int_{\widehat{\R^n}/G}\displaystyle\int_{\widehat{K_\gamma}}{\|G_\psi f(u,t,\gamma,\sigma)\|_{\text{HS}}^2}\ d\mu_\gamma(\sigma)\ d\overline{\mu}_{\R^n}(\overline{\gamma})\ du\ dt\right)^{1-\frac{1}{b}}\nonumber&\\
&\geq\displaystyle\int_{K\times \R^n}\displaystyle\int_{\widehat{\R^n}/G}\displaystyle\int_{\widehat{K_\gamma}}{\|\gamma\|^2\ \|G_\psi f(u,t,\gamma,\sigma)\|_{\text{HS}}^2}\ d\mu_\gamma(\sigma)\ d\overline{\mu}_{\R^n}(\overline{\gamma})\ du\ dt. \label{step5-CE}& 
\end{flalign}
Combining \eqref{step3-CE}, \eqref{step4-CE} and \eqref{step5-CE}, we have
\begin{flalign*}
&\|\psi\|_2\ \|f\|_2^2&\\
&\leq C\left(\displaystyle\int_{K\times \R^n}{\|x\|^{2a}\ |f(k,x)|^2}\ dx\ dk\right)^{\frac{1}{2a}}\ (\|f\|_2^2)^{\frac{1}{2}-\frac{1}{2a}} &\\*
&\qquad \times\left(\displaystyle\int_{K\times \R^n}\displaystyle\int_{\widehat{\R^n}/G}\displaystyle\int_{\widehat{K_\gamma}}{\|\gamma\|^{2b}\ \|G_\psi f(u,t,\gamma,\sigma)\|_{\text{HS}}^2}\ d\mu_\gamma(\sigma)\ d\overline{\mu}_{\R^n}(\overline{\gamma})\ du\ dt\right)^{\frac{1}{2b}}&\\
&\qquad \times(\|\psi\|_2^2\ \|f\|_2^2)^{\frac{1}{2}-\frac{1}{2b}}. &
\end{flalign*}
Thus, we have the required inequality \eqref{heq-GT-CE}.
\end{proof}
%%%%%%%%%%%%%
\begin{ex}
We give the explicit expression of the Heisenberg uncertainty inequality for Gabor transform in the following cases:
\begin{enumerate}
\item Euclidean group $\R^n$.
\begin{flalign*}
&\|\psi\|_2^{\frac{1}{b}}\ \|f\|_2^{\left(\frac{1}{a}+\frac{1}{b}\right)}\leq C\left(\displaystyle\int_{\R^n}{\|x\|^{2a}\ |f(x)|^2}\ dx\right)^{\frac{1}{2a}}\\*
&\qquad \times \left(\displaystyle\int_{\R^n}\displaystyle\int_{\widehat{\R^n}}{\|\omega\|^{2b}\ \|G_\psi f(t,\omega)\|_{\text{HS}}^2}\ dt\ d\omega\right)^{\frac{1}{2b}}. 
\end{flalign*}
\item $\R^n\times K$, where $K$ is a separable unimodular locally compact group of type I.
\begin{flalign*}
&\|\psi\|_2^{\frac{1}{b}}\ \|f\|_2^{\left(\frac{1}{a}+\frac{1}{b}\right)}\leq C\left(\displaystyle\int_{\R^n\times K}{\|x\|^{2a}\ |f(x,k)|^2}\ dx\ dk\right)^{\frac{1}{2a}}\\*
&\qquad \times \left(\displaystyle\int_{\R^n\times K}\displaystyle\int_{\R^n\times \widehat{K}}{\|z\|^{2b}\ \|G_\psi f(t,u,z,\gamma)\|_{\text{HS}}^2}\ dz\  d\gamma\ dt\ du\right)^{\frac{1}{2b}}. 
\end{flalign*}
\item Heisenberg Group $\h_n$ (see \cite{Tha:90}).
\begin{flalign*}
&\|\psi\|_2^{\frac{1}{b}}\ \|f\|_2^{\left(\frac{1}{a}+\frac{1}{b}\right)}\leq C\left(\displaystyle\int_{\h_n}{|t|^{2a}\ |f(z,t)|^2}\ dz\ dt\right)^{\frac{1}{2a}} \\*
&\qquad \times \left(\displaystyle\int_{\h_n}\displaystyle\int_{\R^\ast}{|\lambda|^{2b}\ \|G_\psi f(z',t',\lambda)\|_{\text{HS}}^2\ |\lambda|^n}\ d\lambda\ dz'\ dt'\right)^{\frac{1}{2b}}. 
\end{flalign*}
\item $K \ltimes \R^n$, where $K$ is a compact subgroup of the group of automorphisms of $\R^n$.
\begin{flalign*}
&\|\psi\|_2^{\frac{1}{b}}\ \|f\|_2^{\left(\frac{1}{a}+\frac{1}{b}\right)}\leq C\left(\displaystyle\int_{K\times \R^n}{\|x\|^{2a}\ |f(k,x)|^2}\ dx\ dk\right)^{\frac{1}{2a}}\\*
&\qquad \times \left(\displaystyle\int_{K\times \R^n}\displaystyle\int_{\widehat{\R^n}/G}\sum_{\sigma \in \widehat{K_\ell}}{\|\ell\|^{2b}\ \|G_\psi f(u,t,\ell,\sigma)\|_{\text{HS}}^2}\ d\bar{\ell}\ du\ dt\right)^{\frac{1}{2b}}.
\end{flalign*}
\item A class of connected, simply connected nilpotent Lie groups $G$ for which the Hilbert-Schmidt norm of the group Fourier transform $\pi_\xi(f)$ of $f$ attains a particular form (see \cite{Ban:Kum:15}).
\begin{flalign*}
&\|\psi\|_2^{\frac{1}{b}}\ \|f\|_2^{\left(\frac{1}{a}+\frac{1}{b}\right)}\leq C\left(\displaystyle\int_{G}{\|x\|^{2a}\ |f(x)|^2}\ dx\right)^{\frac{1}{2a}}\\*
&\qquad \times \left(\displaystyle\int_{G}\displaystyle\int_{\W}{\|\xi\|^{2b}\ \|G_\psi f(y,\xi)\|_{\text{HS}}^2\ \dfrac{1}{|h(\xi)|^b\ |\pf(\xi)|^{b-1}}}\ d\xi\ dy\right)^{\frac{1}{2b}}. 
\end{flalign*}
\item For thread-like nilpotent Lie groups (see \cite{Kan:Kum:01}).
\begin{flalign*}
&\|\psi\|_2^{\frac{1}{b}}\ \|f\|_2^{\left(\frac{1}{a}+\frac{1}{b}\right)}\leq C\left(\displaystyle\int_{G}{\|x\|^{2a}\ |f(x)|^2}\ dx\right)^{\frac{1}{2a}}\\*
&\qquad\times \left(\displaystyle\int_{G}\displaystyle\int_{\W}{\|\xi\|^{2b}\ \|G_\psi f(y,\xi)\|_{\text{HS}}^2}\ |\xi_1|\ d\xi\right)^{\frac{1}{2b}}.
\end{flalign*}
\item For $2$-NPC nilpotent Lie groups (see \cite{Bak:Sal:08}), let $\{0\}=\g_0 \subset \g_1 \subset \cdots \subset \g_n =\g$ be a Jordan-H\"older sequence in $\g$ such that $\g_m=\z(g)$ and $\hf=\g_{n-2}$. We have the following two cases:
\begin{enumerate}
\item $\dim{[\g,\g_{m+1}]}=2$. 
\begin{flalign*}
&\|\psi\|_2^{\frac{1}{b}}\ \|f\|_2^{\left(\frac{1}{a}+\frac{1}{b}\right)}\leq C\left(\displaystyle\int_{G}{\|x\|^{2a}\ |f(x)|^2}\ dx\right)^{\frac{1}{2a}}\\
&\qquad \times \left(\displaystyle\int_{G}\displaystyle\int_{\W}{\|\xi\|^{2b}\ \|G_\psi f(y,\xi)\|_{\text{HS}}^2}\ \dfrac{1}{|h(\xi)|^b|\pf(\xi)|^{b-1}}\ d\xi\right)^{\frac{1}{2b}}.
\end{flalign*}
\item $\dim{[\g,\g_{m+1}]}=1$. 
\begin{flalign*}
&\|\psi\|_2^{\frac{1}{b}}\ \|f\|_2^{\left(\frac{1}{a}+\frac{1}{b}\right)}\leq C\left(\displaystyle\int_{G}{\|x\|^{2a}\ |f(x)|^2}\ dx\right)^{\frac{1}{2a}}\\
&\times \left(\displaystyle\int_{G}\displaystyle\int_{\W}{\|\xi\|^{2b}\ \|G_\psi f(y,\xi)\|_{\text{HS}}^2}\ |\pf(\xi)|\ d\xi\right)^{\frac{1}{2b}}.
\end{flalign*}
\end{enumerate}
\item For connected simply connected nilpotent Lie groups $G=\expo{\g}$ such that $\g(\xi) \subset [\g,\g]$ for all $\xi \in \U$ (see \cite{Sma:11}).
\begin{flalign*}
&\|\psi\|_2^{\frac{1}{b}}\ \|f\|_2^{\left(\frac{1}{a}+\frac{1}{b}\right)} \leq C\left(\displaystyle\int_{G}{\|x\|^{2a}\ |f(x)|^2}\ dx\right)^{\frac{1}{2a}}\\
&\times\left(\displaystyle\int_{G}\displaystyle\int_{\W}{\|\xi\|^{2b}\ \|G_\psi f(y,\xi)\|_{\text{HS}}^2}\ \dfrac{|\pf(\xi)|^{b+1}}{|\xi([X_{j_1},X_n])|^b}\ d\xi\right)^{\frac{1}{2b}}.
\end{flalign*}
\item For low-dimensional nilpotent Lie groups of dimension less than or equal to $6$ (for details, see \cite{Nie:83}) except for $G_{6,8}$, $G_{6,12}$, $G_{6,14}$, $G_{6,15}$, $G_{6,17}$, one can write an explicit Heisenberg uncertainty inequality for Gabor transform.
\end{enumerate}
\end{ex}
%%%%%%%%%%%%%%
\begin{bibdiv}
\begin{biblist}
\bib{Bak:Sal:08}{article}{
title={On theorems of Beurling and Cowling-Price for certain nilpotent Lie groups},
author={Baklouti, A.},
author={Salah, N.B.},
year={2008},
volume={132},
pages={529-550},
journal={Bull. Sci. Math.}
}
\bib{Ban:Kum:15}{article}{
title={Generalized analogs of the Heisenberg uncertainty inequality},
author={Bansal, A.},
author={Kumar, A.},
journal={Journal of Inequalities and Applications},
volume={2015},
number={1},
pages={1-15},
year={2015},
publisher={Springer}
}
\bib{Far:Kam:12}{article}{
title={Continuous Gabor transform for a class of non-Abelian groups},
author={Farashahi, A. G.},
author={Kamyabi-Gol, R.},
year={2012},
volume={19},
pages={683-701},
journal={Bull. Belg. Math. Soc. Simon Stevin}
}
\bib{Fol:94}{book}{
title={A Course in Abstract Harmonic Analysis},
author={Folland, G. B.},
year={1994},
publisher={CRC Press}
}
\bib{Fol:Sit:97}{article}{
title={The Uncertainty Principle: A Mathematical Survey},
author={Folland, G.B.},
author={Sitaram, A.},
year={1997},
volume={3},
pages={207-238},
number={3},
journal={The Journal of Fourier Analysis and Applications}
}
\bib{Kan:Kum:01}{article}{
title={Hardy's Theorem for simply connected nilpotent Lie groups},
author={Kaniuth, E.},
author={Kumar, A.},
year={2001},
pages={487-494},
number={131},
journal={Math. Proc. Camb. Phil. Soc.}
}
\bib{Kle:Lip:73}{article}{
title={The plancherel formula for group extensions II},
author={Kleppner, A.},
author={Lipsman, R.},
year={1973},
volume={6},
pages={103-132},
number={1},
journal={Annales scientifiques de l'\'Ecole Normale Sup\'erieure}
}
\bib{Mac:76}{book}{
title={The theory of unitary group representations},
author={Mackey, G.W.},
year={1976},
publisher={University of Chicago Press Chicago}
}
\bib{Nie:83}{article}{
title={Unitary representations and coadjoint orbits of low-dimensional nilpotent Lie groups},
author={Nielson, O.A.},
year={1983},
journal={Queens Papers in Pure and Appl. Math., Queen's Univ., Kingston, ON}
}
\bib{Sit:Sun:Tha:95}{article}{
title={Uncertainty principles on certain Lie groups},
author={Sitaram, A.},
author={Sundari, M.},
author={Thangavelu, S.},
year={1995},
volume={105},
pages={135-151},
journal={Proc. Math. Sci.}
}
\bib{Sma:11}{article}{
title={Beurling's Theorem for nilpotent Lie groups},
author={Smaoui, K.},
year={2011},
volume={48},
pages={127-147},
journal={Osaka J. Math.}
}
\bib{Tha:90}{article}{
title={Some uncertainty inequalities},
author={Thangavelu, S.},
year={1990},
volume={100},
pages={137-145},
number={2},
journal={Proc. Indian Acad. Sci.}
}
\bib{Wil:00}{article}{
title={New uncertainty principles for the continuous Gabor transform and the continuous wavelet transform},
author={Wilczok, Elke},
journal={Documenta Mathematica},
volume={5},
pages={201--226},
year={2000},
publisher={Universi{\"a}t Bielefeld, Fakult{\"a}t f{\"u}r Mathematik}
}
\bib{Xia:He:12}{article}{
title={Uncertainty inequalities for the Heisenberg group},
author={Xiao, J.},
author={He, J.},
year={2012},
volume={122},
pages={573-581},
number={4},
journal={Proc. Indian Acad. Sci. (Math. Sci.)}
}
\end{biblist}
\end{bibdiv}
\end{document}